\newtheorem{theorem}{Theorem}[section]
\newtheorem{proposition}[theorem]{Proposition}
\newtheorem{lemma}[theorem]{Lemma}
\newtheorem{remark}{Remark}
\theoremstyle{remark}
\newtheorem{definition}{Definition}
\def\supp{\operatorname{{supp}}}
\def\re{\mathrm{Re}}
\def\im{\mathrm{Im}}
\numberwithin{equation}{section}
\begin{document}
	
	\title[Critical NLS: Local control results]{Control of the Schr\"{o}dinger equation in $\mathbb{R}^3$: The critical case}
	\author[Braz e Silva]{P. Braz e Silva$^\star$}
	\address{Departamento de Matem\'atica, Universidade Federal de Pernambuco, S/N Cidade Universit\'aria, 50740-545, Recife (PE), Brazil}
	\email{pablo.braz@ufpe.br}
	\author[Capistrano--Filho]{R. de A. Capistrano--Filho}
	\address{Departamento de Matem\'atica, Universidade Federal de Pernambuco, S/N Cidade Universit\'aria, 50740-545, Recife (PE), Brazil}
	\email{roberto.capistranofilho@ufpe.br}
	\author[Carvalho]{J. D. do N. Carvalho}
	\address{Departamento de Matem\'atica, Universidade Federal de Pernambuco, S/N Cidade Universit\'aria, 50740-545, Recife (PE), Brazil}
	\email{jackellyny.dassy@ufpe.br}
	\author[Dos Santos Ferreira]{D. dos Santos Ferreira}
	\address{Institut \'Elie Cartan de Lorraine, UMR CNRS 7502, \'equipe SPHINX, INRIA, Universit\'e de Lorraine, F-54506 Vandoeuvre-lès-Nancy Cedex, France.}
	\email{ddsf@math.cnrs.fr}
	\thanks{$^\star$Corresponding author: pablo.braz@ufpe.br}
	\subjclass[2020]{93B05, 93B07, 35Q55, 49K40}
	\keywords{NLS system, Critical case, Internal controllability, HUM}
	
	\begin{abstract}
		This article deals with the $H^{1}$--level local null controllability for the energy-critical nonlinear Schr\"{o}dinger equation in $\mathbb{R}^3$. Firstly, we demonstrate that the problem under consideration is well-posed using Strichartz estimates. Moreover, through the Hilbert uniqueness method, we prove the linear Schr\"{o}dinger equation to be controllable. Finally, we use a perturbation argument and show local controllability for the critical nonlinear Schr\"{o}dinger equation. 
	\end{abstract}
	
	\date{\today}
	\maketitle

	\thispagestyle{empty}
	
	\normalsize
	\section{Introduction}
	The study of the energy-critical case of the nonlinear Schrödinger equation (C-NLS) in four dimensions, namely, 
	\begin{equation}\label{eq1aa}
		\left\{
		\begin{array}{lr}
			i\partial_{t} u + \Delta u  \pm |u|^{4}u =0 , \ \mbox{on }\mathbb{R}^{3} \times [0,+\infty),\\
			u(x,0) = u_{0} (x) \in H^{1}(\mathbb{R}^{3}),
		\end{array}
		\right.
	\end{equation} 
	is motivated by several deep mathematical and physical reasons. In the context of partial differential equations, the term ``energy-critical" refers to the fact that the associated energy of the equation is invariant with respect to its natural scaling.  For the usual NLS (with $|u|^{2}u$ instead of $|u|^{4}u$ in \eqref{eq1aa}), this scaling symmetry is crucial because it allows for the potential development of self-similar blow-up solutions, which are central to understanding singularity formation. In four dimensions, the equation \eqref{eq1aa} is energy-critical, meaning the nonlinearity is precisely balanced with the dispersive nature of the equation. This balance leads to delicate analytical challenges, such as the need for sophisticated tools to study global well-posedness, scattering, and the behavior of solutions.
	
	Understanding whether solutions scatter (i.e., behave like linear solutions at infinity) is a central question for large data. The system \eqref{eq1aa} in four dimensions is a key case for studying the global dynamics of solutions, including whether all solutions with sub-critical energy scatter or if some can lead to blow-up. 
	In summary, the energy-critical system \eqref{eq1aa} in four dimensions is a rich and challenging problem with deep connections to pure and applied mathematics, making it a significant and highly motivated area of study. For details, we strongly encourage the reader to see \cite{cazenave_book,KeMe,Visan,MeRa,Tao,KeMe1,cazenave1} and the references therein.

	\subsection{Addressed issues and review of the literature}
	In this article, we will consider the ${H}^{1}(\mathbb{R}^{3})$ local controllability for the 
	energy-critical nonlinear Schr\"{o}dinger equation (C-NLS)
	\begin{equation}\label{eq1a}
		\left\{
		\begin{array}{lr}
			i\partial_{t} u + \Delta u  \pm |u|^{4}u =f(x,t) , \ \mbox{on }\mathbb{R}^{3} \times [0,+\infty),\\
			u(x,0) = u_{0} (x) \in {H}^{1}(\mathbb{R}^{3}),
		\end{array}
		\right.
	\end{equation}
	where $u = u(x, t)$ is a complex-valued function of two variables $x \in\mathbb{R}^3$ and $t\in\mathbb{R}$, the subscripts
	denote the corresponding partial derivatives, while the function $f(x, t)$ is a control input.	We are mainly concerned with the following null control problem for system \eqref{eq1a}. 
	
	\vspace{0.2cm}
	
	\noindent\textbf{Control problem:} Let $T>0$ be given. For any given $u_0 \in {H}^{1}(\mathbb{R}^{3})$, can one find a control $f(x,t)$ such that system \eqref{eq1a} admits a solution $u \in C\left([0, T] ;{H}^{1}(\mathbb{R}^{3})\right)$ satisfying
	$u(x, T)=0$ in $\mathbb{R}^3$?
	
	\vspace{0.2cm}

	Control properties of Schr\"{o}dinger equations have received much attention in the last decades. For example, regarding control issues, one may see \cite{KoLo,Miller,Phung,RaTaTeTu} and the references therein. As for Carleman estimates and applications to inverse problems, we cite \cite{BaPu,CaGa,CaCriGa,LaTriZhang, MeOsRo, YuYa} and the references therein. An excellent review of the contributions up to 2003 is in \cite{Zuazua}.
	
	Let us detail some recent results. The results due to Illner \textit{et al.} \cite{IlLaTe,IlLaTe1} considered internal controllability of the nonlinear Schr\"{o}dinger equation posed on a finite interval $(-\pi,\pi)$
	\begin{equation}\label{IlLaTe}
		\begin{cases}
			i\partial_tv+\partial_{xx}v+\lambda|v|^2v=f(x,t), \quad x\in(-\pi,\pi) ,\\
			v(-\pi,t)=v(\pi,t),\text{   }\text{   }\partial_xv(-\pi,t)=\partial_xv(\pi,t), 
		\end{cases}
	\end{equation}
	where the forcing function $f=f(x,t)$, supported in a sub-interval of $(-\pi,\pi)$, is a control input. They showed that system \eqref{IlLaTe} is locally exactly controllable in the space 
	\[
	H^1_p(-\pi,\pi):=\{v\in H^1(-\pi,\pi):v(-\pi)=v(\pi)\}.
	\]
	Later, Lange and Teismann \cite{LaTe} considered the internal control of the same nonlinear Schr\"{o}dinger equation in \eqref{IlLaTe} posed on a finite interval but with the homogeneous Dirichlet boundary conditions
	$$
	v(-\pi,t)=v(\pi,t)=0, 
	$$
	and showed that this is locally exactly controllable in the space $H^1_0(0,\pi)$ around a special ground state of the equation (see also \cite{Laurent} for internal controllability of the nonlinear Schr\"{o}dinger equation posed on a finite interval).
	
	
	In  \cite{RoZhaJDE}, Rosier and Zhang considered the nonlinear Schr\"{o}dinger equation
	\begin{equation}
		\label{s1_R}
		i\partial_tu+\Delta u +\lambda|u|^2u=0 
	\end{equation}
	posed on a bounded domain $K$ in $\mathbb{R}^n$ with both Dirichlet boundary conditions and Neumann boundary conditions. They showed that if either 
	\[
	s>\frac{n}{2} 
	\]
	or
	\[
	0\leq s<\frac{n}{2},\text{  }\text{ with } 1\leq n<2+2s 
	\]
	or
	\[
	s=0,1\text{  }\text{ with } n=2,
	\]
	then both systems, with Dirichlet and Neumann conditions, when the control inputs are acting on the whole boundary of $K$, are locally exactly controllable in the classical Sobolev space $H^s(K)$ around any smooth solution of the Schr\"{o}dinger equation.
	
	In \cite{RoZhaMMM}, the authors extend the results of Rosier and Zhang \cite{RoZhaSIAM}. More precisely, they assume that the spatial variable belongs to the rectangle
	\[
	\mathcal{R}=(0,l_1)\times\cdot\cdot\cdot\times(0,l_n)
	\]
	and investigate the control properties of the semi-linear Schr\"{o}dinger equation 
	\[
	i\partial_tu+\Delta u+\lambda|u|^{\alpha}u=ia(x)h(x,t), \text{  }\text{  }x\in\mathbb{T}^n\text{  }\text{  }t\in(0,T),
	\]
	where $\lambda\in\mathbb{R}$ and $\alpha\in 2\mathbb{N}^{\ast}$ by combining new linear controllability results in the spaces $H^s(\mathcal{R})$ with Bourgain analysis. In this case, the geometric control condition is not required (see \cite{RoZhaMMM} for more details). It is important to note that these cases are studied in the subcritical case. 
	
	Finally,  considering a $2d$-compact Riemann manifold $M$ without boundary, Dehman \textit{et al.} \cite{DeGeLe} studied internal control and stabilization of nonlinear Schr\"{o}dinger equations
	\begin{equation*}
		i\partial_tw+\Delta w-|w|^2w=f(x,t),\text{  }x\in M.
	\end{equation*}
	They showed, in particular, that the system is semi-globally exactly controllable and semi-globally exponentially stabilizable in the space $H^1(M)$, assuming that both the geometric control condition and a unique continuation condition are satisfied (see \cite{DeGeLe} for more details). The work \cite{CaPa} extended this one: The authors studied global controllability and stabilization properties for the fractional Schr\"odinger equation on $d-$dimensional compact Riemannian manifolds without boundary $(M, g)$.  They used microlocal analysis to show the propagation of regularity, which, together with the geometric control condition (GCC) and a unique continuation property, allowed them to prove global control results. 
	
	We conclude by mentioning a recent work. In a very interesting work \cite{Taufer}, Ta\"ufer demonstrated that the controllability of the linear Schrödinger equation in $\mathbb{R}^d$ holds for any time $T > 0$ with internal control supported on nonempty, periodic, open sets. This result, in particular, showed that the controllability of the linear Schrödinger equation in full space extends to a strictly larger class of control supports than that of the wave equation.

	\subsection{Main results} As discussed above, most of the available results at the moment are for the classical Schr\"odinger equation \eqref{s1_R} in different domains and considering the control inputs acting on the whole boundary. However, in the critical case, namely, system \eqref{eq1a}, the internal control problem remains open. 
	
	Motivated by the ideas contained in \cite{RoZhaJDE},  let us present the first answer to the control problem stated at the beginning of the section. To do this, consider the control system 
	\begin{equation}\label{eq1}
		\left\{
		\begin{array}{lr}
			i\partial_{t} u + \Delta u  \pm |u|^{4}u =\varphi(x)h(x,t) , \ \mbox{on }\mathbb{R}^{3} \times [0,+\infty),\\
			u(x,0) = u_{0} (x) \in {H}^{1}(\mathbb{R}^{3}),
		\end{array}
		\right.
	\end{equation}
	where  the function $\varphi \in C^{\infty}(\mathbb{R}^3,[0,1])$ is such that
	\begin{equation} \label{varphi} 
		\varphi(x)=0 \quad \mbox{ if } \qquad |x| \leq R, \mbox{ and } \varphi(x) =1 \quad\mbox{ if } \qquad|x| \geq R+1.
	\end{equation}
for some $R>0$ large enough fixed. Our result below gives a first answer in this direction. 
	
	\begin{theorem}[Local controllability]\label{main} Let $T>0$ be given. There exists $\delta > 0$ such that for any $u_{0}\in H^{1}(\mathbb{R}^{3})$ satisfying
		$$\|u_{0}\|_{H^{1}} \leq \delta,$$ one can find $h(x,t) \in C([0,T];H^{1}(\mathbb{R}^{3}))$ such that system \eqref{eq1} admits a solution $u \in C ([0,T] ; {H}^{1}(\mathbb{R}^{3}))$ satisfying $u(T) = 0.$
	\end{theorem}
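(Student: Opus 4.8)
The plan is to establish Theorem~\ref{main} by the classical linearization-plus-fixed-point scheme: combine the linear controllability result already obtained (via HUM) with the Strichartz machinery used for the well-posedness of \eqref{eq1}, treating the energy-critical term $|u|^{4}u$ as a perturbative source. Fix $T>0$ and let $X_{T}$ denote the Strichartz resolution space at the energy level in which solutions of \eqref{eq1} are built in the well-posedness section — schematically $X_{T}=C([0,T];H^{1}(\mathbb{R}^{3}))\cap(\dot S^{1}\text{-type Strichartz norms on }[0,T])$, whose defining norm controls in particular $\|u\|_{L^{10}([0,T]\times\mathbb{R}^{3})}$ — and let $Z$ be the corresponding retarded (dual-Strichartz) space in which the nonlinearity is placed. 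I will use the $H^{1}$-critical nonlinear estimates
\[
\bigl\||u|^{4}u\bigr\|_{Z}\lesssim \|u\|_{X_{T}}^{5},\qquad
\bigl\||u|^{4}u-|w|^{4}w\bigr\|_{Z}\lesssim \bigl(\|u\|_{X_{T}}^{4}+\|w\|_{X_{T}}^{4}\bigr)\|u-w\|_{X_{T}},
\]
valid because $|z|^{4}z$ is smooth in $(z,\bar z)$ and $|u|^{4}u$ is exactly the $\dot H^{1}$-critical power in dimension three.

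First I would upgrade the linear controllability statement to allow a source term: given $u_{0}\in H^{1}$ and $g\in Z$, one seeks $h$ so that the solution of $i\partial_{t}v+\Delta v=\varphi h+g$, $v(0)=u_{0}$, satisfies $v(T)=0$. Writing $v=\tilde v+w$ with $w(t)=-i\int_{0}^{t}e^{i(t-s)\Delta}g(s)\,ds$ (so $w(0)=0$ and, by the inhomogeneous Strichartz estimate, $\|w\|_{X_{T}}\lesssim\|g\|_{Z}$) reduces this to the \emph{homogeneous} linear control problem for $\tilde v$ with initial datum $u_{0}$ and prescribed final state $-w(T)\in H^{1}$; since the Schr\"odinger flow is time-reversible, exact controllability between arbitrary $H^{1}$ states is equivalent to the null controllability proved earlier. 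HUM then furnishes a bounded linear control map $\mathcal{H}\colon H^{1}\times Z\to C(\mathbb{R};H^{1})$, $(u_{0},g)\mapsto h$, of the adjoint-state form $h=\varphi\,\overline{z}$ with $z$ a free Schr\"odinger solution, together with a bounded linear trajectory map $\mathcal{U}\colon H^{1}\times Z\to X_{T}$, $(u_{0},g)\mapsto v$, obeying
\[
\|\mathcal{U}(u_{0},g)\|_{X_{T}}\le C_{0}\bigl(\|u_{0}\|_{H^{1}}+\|g\|_{Z}\bigr),
\]
the constant $C_{0}=C_{0}(T,R)$ absorbing the observability constant for the control support $\{|x|\ge R\}$ together with the (universal) Strichartz constants.

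Next I would run the contraction. On the ball $B_{\rho}=\{u\in X_{T}:\|u\|_{X_{T}}\le\rho\}$ define $\Phi(u):=\mathcal{U}\bigl(u_{0},-|u|^{4}u\bigr)$; any fixed point $u=\Phi(u)$ is, by construction, a solution of \eqref{eq1} with control $h=\mathcal{H}\bigl(u_{0},-|u|^{4}u\bigr)\in C(\mathbb{R};H^{1}(\mathbb{R}^{3}))$ and satisfies $u(T)=0$, so it suffices to make $\Phi$ a contraction of $B_{\rho}$. From the bound on $\mathcal{U}$ and the two nonlinear estimates,
\[
\|\Phi(u)\|_{X_{T}}\le C_{0}\|u_{0}\|_{H^{1}}+C_{0}C_{1}\rho^{5},\qquad
\|\Phi(u)-\Phi(w)\|_{X_{T}}\le 2C_{0}C_{1}\rho^{4}\,\|u-w\|_{X_{T}}.
\]
Choosing $\rho=2C_{0}\delta$ and then $\delta>0$ small enough that $C_{0}C_{1}\rho^{4}\le\tfrac14$ gives $\Phi(B_{\rho})\subset B_{\rho}$ and Lipschitz constant $\le\tfrac12$ on $B_{\rho}$, so the Banach fixed-point theorem yields the desired $u$ whenever $\|u_{0}\|_{H^{1}}\le\delta$; continuity of $\mathcal{H}$ delivers $h\in C(\mathbb{R};H^{1}(\mathbb{R}^{3}))$, and $X_{T}\hookrightarrow C([0,T];H^{1})$ gives the claimed regularity of $u$.

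The main obstacle is precisely the \emph{critical} character of the nonlinearity: since $|u|^{4}u$ is the $\dot H^{1}$-critical power in $\mathbb{R}^{3}$, the nonlinear estimates above are scale invariant and carry \emph{no} positive power of $T$, so — unlike in the subcritical framework of \cite{RoZhaJDE,RoZhaMMM} — smallness for the contraction cannot be extracted by shrinking the time horizon (which must remain an arbitrary given $T$) and has to come entirely from the size $\delta$ of the data. This forces the fixed-point space to be organized around the (essentially $T$-independent) critical Strichartz norms, where $\|e^{it\Delta}u_{0}\|_{X_{T}}\lesssim\|u_{0}\|_{H^{1}}$, rather than around $C([0,T];H^{1})$ alone; in particular one must know not merely that the HUM control steers the \emph{linear} trajectory to zero in $C([0,T];H^{1})$, but that this trajectory — hence $\mathcal{U}$ — is bounded into the full Strichartz space $X_{T}$, which is checked by running the inhomogeneous Strichartz estimates on the HUM-constructed solution. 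The remaining delicate ingredient, already settled in the linear part, is the observability inequality underlying HUM for the Schr\"odinger group in the whole space $\mathbb{R}^{3}$ with control localized in the exterior region $\{|x|\ge R\}$, where the largeness of $R$ in \eqref{varphi} is used.
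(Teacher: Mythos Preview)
Your proposal is correct and shares the paper's overall strategy: treat $|u|^{4}u$ as a perturbation of the linear controlled equation and close a contraction using the HUM isomorphism together with the energy-critical Strichartz estimates, the smallness coming from $\|u_{0}\|_{H^{1}}$ rather than from shrinking $T$ --- a point you identify correctly.

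The one genuine difference is the choice of fixed-point variable. The paper follows Zuazua's template literally and runs the contraction in $H^{-1}(\mathbb{R}^{3})$, the space of \emph{adjoint} initial data: the unknown is $\Phi_{0}\in H^{-1}$, the control is $h=\Lambda^{-1}(\varphi\,e^{it\Delta}\Phi_{0})$, one solves \eqref{c18} backward from $u(T)=0$ with this forcing, splits $u=v+\Psi$ (nonlinear part $+$ linearly controlled part), and iterates $\mathcal{B}\Phi_{0}=\Gamma^{-1}u_{0}-\Gamma^{-1}v(0)$ where $\Gamma$ is the HUM isomorphism from Theorem~\ref{lin}. You instead package the linear-control-with-source step into a bounded operator $\mathcal{U}:H^{1}\times Z\to X_{T}$ and iterate directly on the \emph{trajectory} $u\in X_{T}$ via $u\mapsto\mathcal{U}(u_{0},-|u|^{4}u)$. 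Both routes rest on exactly the same three ingredients --- boundedness of $\Gamma^{-1}$, the critical nonlinear Strichartz bounds for $|u|^{4}u$, and the fact that the HUM-controlled linear trajectory belongs to the full Strichartz space $X_{T}$ (not just $C([0,T];H^{1})$) --- and the estimates match line by line. Your formulation is a bit more modular and closer in spirit to a well-posedness proof; the paper's is more explicit about how the HUM parameter $\Phi_{0}$ enters, which makes the dependence of $h$ on the data more transparent. One small inaccuracy: the HUM control is $h=\Lambda^{-1}(\varphi\Phi)$ with $\Lambda:H^{1}\to H^{-1}$ the Riesz isomorphism, not literally $\varphi\,\overline{z}$, but this does not affect your argument.
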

	In our arguments, Strichartz-type inequalities play a fundamental role in giving the well-posedness of the system \eqref{eq1}. In addition, as a first step, we have used a unique continuation ensured by a Carleman estimate proved by \cite{MeOsRo} (see also \cite{Laurent1}).
	\begin{theorem}\label{lin}
		For every initial data $u_{0} \in H^{1}(\mathbb{R}^{3})$ and every $T>0$, there exists a control $h(x,t) \in C(\mathbb{R}; H^{1}(\mathbb{R}^{3}))$ with support in $\mathbb{R}\times (\mathbb{R}^{3} \backslash B_{R}(0))$, $R>0$, such that the unique solution of the linear system associated to \eqref{eq1}, namely,
			\begin{equation*}
			\left\{
			\begin{array}{lr}
				i\partial_{t} u + \Delta u =\varphi(x)h(x,t) , \ \mbox{on }\mathbb{R}^{3} \times [0,+\infty),\\
				u(x,0) = u_{0} (x) \in {H}^{1}(\mathbb{R}^{3}),
			\end{array}
			\right.
		\end{equation*}
		 satisfies $u(T,\cdot) = 0$.	
	\end{theorem}
	With Theorem \ref{lin} in hand, a perturbation argument ensures that we can get a local controllability result
	for the critical Schrödinger equation \eqref{eq1}, so giving Theorem \ref{main}.
	
	\begin{remark} As mentioned above, this work is inspired by the well-posedness results in $\dot{H}^1$ for the critical nonlinear Schrödinger equation established in \cite{KeMe}, as well as by the control results in the $H^s$-framework for noncritical nonlinear Schrödinger equations in bounded domains $\Omega \subset \mathbb{R}^n$ developed in \cite{RoZhaJDE}. Let us give some remarks in order. 
\begin{itemize}
\item Our first main contribution consists of relaxing the well-posedness setting in the radial case. More precisely, while the critical theory is typically developed in $\dot{H}^1$, we show that, due to the availability of suitable Strichartz estimates in $\mathbb{R}^3$, it is sufficient to consider initial data in $H^1$.
\item From the control-theoretic viewpoint, this improvement allows us to directly address the nonlinear control problem in the critical regime, namely for the nonlinearity $|u|^{4}u$. This represents a significant advancement compared to \cite{RoZhaJDE}, where only noncritical cases are treated. On the other hand, this approach requires working in the whole space, which introduces additional difficulties. We refer the reader to the final section of the paper for a detailed discussion of this aspect.
\end{itemize}
	\end{remark}
	\subsection{Structure of this work} We finish our introduction by giving an outline of this work. It is divided as follows: In Section \ref{Sec2}, we give auxiliary results that are important in establishing our control result. Precisely, we present a review of the Cauchy problem for the Schrödinger equation. In Section \ref{Sec3}, we present the proof of our main results. Firstly, we show the local null controllability around the null trajectories for the linear system associated with \eqref{eq1}, proving Theorem \ref{lin}. Then, through a perturbation argument, we show Theorem \ref{main}. Finally, we discuss some future perspectives in Section \ref{Sec4}.


	\section{A review of the Cauchy problem\label{Sec2}}
	\subsection{Smoothing}  For the sake of completeness, we discuss the smoothing properties of the linear Schrödinger equation, 
	\begin{equation}\label{ss1}
		\begin{cases}
			i \partial_tu+\Delta u=0,  & \mbox{on } \mathbb{R}^{3}\times \mathbb{R},\\
			u(x, 0)=\psi(x), & x \in \mathbb{R}^3,
		\end{cases}
	\end{equation}
	which will play an important role in establishing the controllability for the defocusing critical nonlinear Schr\"{o}dinger equation
	\eqref{eq1}.  To this end, in the view of Rosier and Zhang \cite{RoZhaJDE}, for $j \in \{1,2,3\}$, let $P_{j}$ be the differential operator on $\mathbb{R}^{4}$ defined by
	\begin{equation} \label{c33}
		P_{j}v(t,x) = (x_{j} + 2it\partial_{j})v(t,x) . 
			\end{equation}
	For a multi-index $\alpha$, define the differential operator $P_{\alpha}$ on $\mathbb{R}^{4}$ by
	$$P_{\alpha} = \prod_{j=1}^{3} P_{j}^{\alpha_{j}}.$$
	Additionally, for $x \in \mathbb{R}^{3}$, consider
	$$x^{\alpha} = \prod_{j=1}^{3} x_{j}^{\alpha_{j}}.$$
	For any smooth function $u(t,x)$, one has 
	$$
	P_{j}u(t,x) = 2ite^{i\frac{|x|^{2}}{4t}}\frac{\partial}{\partial x_{j}} \big(e^{-i\frac{|x|^{2}}{4t}}u(t,x)\big).
	$$
	Hence,
	$$P_{\alpha} u(t,x) = (2it)^{|\alpha|}e^{i\frac{|x|^{2}}{4t}}D^{\alpha}\big(e^{-i\frac{|x|^{2}}{4t}}u(t,x)\big).$$
	On the other hand, we easily obtain 
	$$[P_{j},i\partial_{t} + \Delta] = 0.$$
	Thus, considering $u \in C(\mathbb{R}, H^{1} (\mathbb{R}^{3}))$ to be any solution of the linear Schrödinger equation \eqref{ss1}, one has that $P_{j}u$ and $P_{\alpha}u$ is also a solution.
	
	Now, with the previous analysis in hand and taking into account the relation \eqref{c33}, we present the next result, which gives a local smoothing property for the linear Schr\"odinger equation \eqref{ss1}.
	
	\begin{proposition}[\cite{RoZhaJDE}] \label{App1}
		Let $\alpha$ be a multi-index and $T > 0$ be given. Let $\psi \in H^{1}(\mathbb{R}^{3})$ be such that $x^{\alpha}\psi \in H^{1}(\mathbb{R}^{3})$. Then, the corresponding solution $u$ of the IVP \eqref{ss1} satisfies
		$P_{\alpha} u \in C(\mathbb{R};H^{1}(\mathbb{R}^{3}))$
		and there exists a constant $C$ depending only on $T$ and $\alpha$ such that 
		$$\|P_{\alpha}u\|_{H^{1}(\mathbb{R}^{3})} \leq C\|x^{\alpha}\psi\|_{H^{1}(\mathbb{R}^{3})},$$
		holds for any $t \in [-T,T]$. In particular, if $\psi \in H^{1}(\mathbb{R}^{3})$ has compact support, then $u$ is infinitely smooth
		everywhere except at $t = 0$.
	\end{proposition}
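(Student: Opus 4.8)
The plan is to exploit the two identities already recorded before the statement: the commutator relation $[P_j,\,i\partial_t+\Delta]=0$ and the conjugation formula $P_\alpha u=(2it)^{|\alpha|}e^{i|x|^2/4t}D^\alpha\!\bigl(e^{-i|x|^2/4t}u\bigr)$. Since the operators $P_j$ also commute with one another, one gets $[P_\alpha,\,i\partial_t+\Delta]=0$, so $P_\alpha u$ again solves the free Schr\"odinger equation in \eqref{ss1}, now with the initial value $P_\alpha u|_{t=0}=x^\alpha\psi$ (at $t=0$ each $P_j$ is just multiplication by $x_j$, and this is propagated by induction). As the free propagator $e^{it\Delta}$ is a strongly continuous unitary group on $L^2(\mathbb R^3)$ commuting with the spatial derivatives --- hence an isometry on $H^1(\mathbb R^3)$ --- and $x^\alpha\psi\in H^1(\mathbb R^3)$ by hypothesis, I would conclude that $P_\alpha u(t,\cdot)=e^{it\Delta}\bigl(x^\alpha\psi\bigr)$ lies in $C(\mathbb R;H^1(\mathbb R^3))$ with $\|P_\alpha u(t,\cdot)\|_{H^1}=\|x^\alpha\psi\|_{H^1}$ for every $t$; this already yields the asserted estimate on $[-T,T]$ with any $C\ge 1$ (the $T$- and $\alpha$-dependence allowed in the statement leaving room for a less sharp argument).

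The one point that must be handled with care --- and the step I would single out as the crux --- is the justification of ``$P_\alpha u$ solves the Schr\"odinger equation'' at the $H^1$ level of regularity, since a priori $P_\alpha u$ involves $|\alpha|$ spatial derivatives of an $H^1$ function. I would avoid any density or weighted-space fuss by reading everything on the Fourier side. For fixed $t$, conjugating $P_j$ by the spatial Fourier transform turns it into $i\partial_{\xi_j}-2t\xi_j$, and a one-line application of the Leibniz rule to the smooth multiplier $\xi\mapsto e^{-it|\xi|^2}$ --- valid in $\mathcal S'(\mathbb R^3)$ with no hypothesis on $\psi$ --- gives
\[
\bigl(i\partial_{\xi_j}-2t\xi_j\bigr)\bigl(e^{-it|\xi|^2}g\bigr)=e^{-it|\xi|^2}\,i\partial_{\xi_j}g .
\]
Iterating over the $|\alpha|$ factors of $P_\alpha$ and using $\widehat{u}(t,\xi)=e^{-it|\xi|^2}\widehat{\psi}(\xi)$ yields $\widehat{P_\alpha u}(t,\xi)=e^{-it|\xi|^2}(i\partial_\xi)^\alpha\widehat{\psi}(\xi)=e^{-it|\xi|^2}\,\widehat{x^\alpha\psi}(\xi)$, i.e. $P_\alpha u(t,\cdot)=e^{it\Delta}(x^\alpha\psi)$; membership in $H^1$ and continuity in $t$ then follow from $x^\alpha\psi\in H^1$ exactly as in the previous paragraph. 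Equivalently, one could prove the identity classically for $\psi\in\mathcal S(\mathbb R^3)$ via $[P_\alpha,\,i\partial_t+\Delta]=0$ together with uniqueness for \eqref{ss1}, and then pass to the limit.

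For the final assertion, suppose $\psi\in H^1(\mathbb R^3)$ has compact support. Then $x^\beta\psi\in H^1(\mathbb R^3)$ for \emph{every} multi-index $\beta$, so by the above $P_\beta u(t,\cdot)\in H^1(\mathbb R^3)\subset L^2(\mathbb R^3)$ for all $\beta$ and all $t$. Fixing $t\neq 0$, the conjugation formula rearranges to
\[
D^\beta\bigl(e^{-i|x|^2/4t}u(t,\cdot)\bigr)=(2it)^{-|\beta|}\,e^{-i|x|^2/4t}\,P_\beta u(t,\cdot),
\]
whose right-hand side has modulus $|2t|^{-|\beta|}\,\bigl|P_\beta u(t,\cdot)\bigr|\in L^2(\mathbb R^3)$ because $e^{-i|x|^2/4t}$ is unimodular. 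Hence $e^{-i|x|^2/4t}u(t,\cdot)\in H^k(\mathbb R^3)$ for every $k$, so it is $C^\infty$ by Sobolev embedding, and multiplying back by the smooth function $e^{i|x|^2/4t}$ shows $u(t,\cdot)\in C^\infty(\mathbb R^3)$ for every $t\neq 0$. Joint smoothness of $u$ on $\mathbb R^3\times(\mathbb R\setminus\{0\})$ would then follow by differentiating the equation, $\partial_t^k u=(i\Delta)^k u$, which for each $k$ is a continuous map of $t\in\mathbb R\setminus\{0\}$ into $C^\infty(\mathbb R^3)$, the time and space derivatives commuting. Apart from the Fourier-side justification flagged above, everything here is routine bookkeeping with the two displayed identities.
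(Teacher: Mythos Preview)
Your argument is correct and follows the same core idea as the paper's proof: since $P_\alpha$ commutes with $i\partial_t+\Delta$ and reduces to $x^\alpha$ at $t=0$, one has $P_\alpha u=e^{it\Delta}(x^\alpha\psi)$, and the $H^1$-isometry of the free propagator gives the estimate. The paper reaches this identity via a density reduction to $\psi\in\mathcal{S}(\mathbb{R}^3)$ and induction on $|\alpha|$, while your Fourier-side computation establishes it directly in $\mathcal{S}'$; you also supply a proof of the final smoothness assertion, which the paper's own proof leaves unaddressed.
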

	
	\subsection{Local existence}  Here,  inspired by the work of Kenig and Merle \cite{KeMe}, we are interested in studying the ${H}^{1}$ critical 
	 Cauchy problem for the nonlinear Schr\"{o}dinger equation (C-NLS)
	\begin{equation} \label{10}
		\left\{
		\begin{array}{lr}
			i\partial_{t} u + \Delta u \pm|u|^{4}u = g, \ \mbox{on }  [0,T] \times \mathbb{R}^{3}, \\
			u(0) = u_{0} \in {H}^{1}(\mathbb{R}^{3}),
		\end{array}
		\right.
	\end{equation}
	where $g(t,x)=g \in L^{\infty}_{loc}(\mathbb{R},H^{1}(\mathbb{R}^{3}))$ and $T>0$. Let us first present some definitions used throughout the paper. 	
	\begin{definition} A pair $(q,r)$ is called $L^{2}$-admissible if $r \in [2,6]$ and q satisfies 
		\begin{equation*} 
			\frac{2}{q} + \frac{3}{r} = \frac{3}{2}. 
		\end{equation*}
		Such a pair is called $H^{1}$-admissible if $r \in [6,+\infty)$ and q satisfies 
		\begin{equation*} 
			\frac{2}{q} + \frac{3}{r} = \frac{1}{2}. 
		\end{equation*}
	\end{definition}
	With these definitions in hand, we present two results that are paramount to proving that the Cauchy problem \eqref{10} is well-posed. The first one is \textit{Strichartz estimates} and the second one is a standard Sobolev embedding. These results can be found in \cite{cazenave_book, KeelTao}  Throughout this paper, we will use the notation $L^{q}_{t}L^{r}_{x}$ to denote mixed space-time spaces $L^{q}(I;L^{r}(\mathbb{R}^{3}))$
	\begin{lemma}\label{strichartz}
		Let $(q,r)$ be a $L^{2}$-admissible pair. We have
		\begin{equation}\label{item_i}
			\|e^{it\Delta}h\|_{L^{q}_{t}L^{r}_{x}} \leq c\|h\|_{L^{2}_{x}},
		\end{equation}
		\begin{equation}\label{item_ii}
\Bigg\|\int_{-\infty}^{+\infty}e^{i(t-\tau)\Delta}g\ d\tau\Bigg\|_{L^{q}_{t}L^{r}_{x}} +\Bigg\|\int_{0}^{t}e^{i(t-\tau)\Delta}g\ d\tau\Bigg\|_{L^{q}_{t}L^{r}_{x}} \leq c\|g\|_{L^{q'}_{t}L^{r'}_{x}} ,
		\end{equation}
		and
		\begin{equation*}
			\Bigg\|\int_{-\infty}^{+\infty} e^{i\tau\Delta}g(\tau)\ d\tau\Bigg\|_{L^{2}_{x}} \leq c\|g\|_{L^{q'}_{t}L^{r'}_{x}}.
		\end{equation*}
The notation `` $'$ " means the conjugated exponent (i.e. $1/q' + 1/r'=1)$.
	\begin{remark}
		Throughout this paper we will use the following $L^{2}$-admissible pairs and their respective pairs of conjugate exponents:
		\begin{itemize}
			\item[i)] $\Big(\frac{10}{3},\frac{10}{3}\Big)$ and $\Big(\frac{10}{7},\frac{10}{7}\Big);$
			\item[ii)]  $\Big(10,\frac{30}{13}\Big)$ and $\Big(2,\frac{6}{5}\Big);$
			\item[iii)] $(\infty,2)$ and $(1,2)$.
		\end{itemize}
	\end{remark}
\end{lemma}
\begin{lemma}[Sobolev Embedding]\label{sobolev}For $v \in L^{10}([0,T];L^{10}(\mathbb{R}^{3}))$ such that $$\nabla v \in  L^{10}([0,T];L^{\frac{10}{13}}(\mathbb{R}^{3})),$$ we have
	$$\|v\|_{L^{10}_{t}L^{10}_{x}} \leq C \|\nabla v\|_{L^{10}_{t}L^{\frac{30}{13}}_{x}}.$$
\end{lemma}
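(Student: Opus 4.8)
The plan is to deduce the claim directly from the scale-invariant (critical) Sobolev embedding on $\mathbb{R}^3$, applied slicewise in the time variable. Let me note first that the exponent $\tfrac{10}{13}$ appearing in the hypothesis should read $\tfrac{30}{13}$, so that it matches the right-hand side of the conclusion and lies in the admissible range $(1,3)$ for the Sobolev inequality in dimension three; I will prove the statement with this (obviously intended) reading.

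First, I would recall the Gagliardo--Nirenberg--Sobolev inequality: for $1 \le p < n$ one has the continuous embedding $\dot W^{1,p}(\mathbb{R}^n) \hookrightarrow L^{p^{\ast}}(\mathbb{R}^n)$ with $\tfrac{1}{p^{\ast}} = \tfrac{1}{p} - \tfrac{1}{n}$. Taking $n=3$ and requiring $p^{\ast}=10$ forces $\tfrac{1}{p} = \tfrac{1}{10} + \tfrac{1}{3} = \tfrac{13}{30}$, i.e.\ $p = \tfrac{30}{13}\in(1,3)$. Hence there is a constant $C=C(n)$ such that
$$\|w\|_{L^{10}(\mathbb{R}^3)} \le C\,\|\nabla w\|_{L^{30/13}(\mathbb{R}^3)} \qquad \text{for every } w \in \dot W^{1,30/13}(\mathbb{R}^3).$$

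Second, I would apply this inequality, for almost every $t\in[0,T]$, to the slice $w = v(t,\cdot)$, which lies in $\dot W^{1,30/13}(\mathbb{R}^3)$ for a.e.\ $t$ by hypothesis. This gives $\|v(t,\cdot)\|_{L^{10}_x} \le C\,\|\nabla v(t,\cdot)\|_{L^{30/13}_x}$ for a.e.\ $t$. Raising both sides to the tenth power, integrating over $t\in[0,T]$ — the integral on the right being finite by hypothesis, and measurability of $t\mapsto \|\nabla v(t,\cdot)\|_{L^{30/13}_x}$ following from Tonelli's theorem — and then taking tenth roots yields
$$\|v\|_{L^{10}_t L^{10}_x} \le C\,\|\nabla v\|_{L^{10}_t L^{30/13}_x},$$
which is the desired inequality.

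There is essentially no obstacle here: the statement is a routine "pull the time variable outside the spatial estimate'' consequence of the endpoint Sobolev embedding, and the only points deserving a word of care are the harmless exponent typo and the measurability of the time-dependent spatial norm, both of which are standard.
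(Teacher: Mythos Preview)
Your proof is correct: the lemma is precisely the homogeneous Sobolev embedding $\dot W^{1,30/13}(\mathbb{R}^3)\hookrightarrow L^{10}(\mathbb{R}^3)$ applied pointwise in $t$ and then integrated, and you correctly flag that the exponent $\tfrac{10}{13}$ in the hypothesis is a typo for $\tfrac{30}{13}$. The paper does not supply its own proof of this lemma but simply refers to standard sources (Cazenave's book and Keel--Tao), so your argument is exactly the routine verification one would expect behind such a citation.
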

For an interval $I$, define the norms $S(I),$ $W(I)$ and $Z(I)$ by
$$\|u\|_{S(I)} = \|u\|_{L^{10}(I;L^{10}(\mathbb{R}^{3}))}, \ \  \|u\|_{Z(I)} = \|u\|_{L^{10}(I;L^{\frac{30}{13}}(\mathbb{R}^{3}))} \ \mbox{and} \ \ \ \|u\|_{W(I)} = \|u\|_{L^{\frac{10}{3}}(I;L^{\frac{10}{3}}(\mathbb{R}^{3}))}.$$ 
The following theorem gives us the solution to problem \eqref{10}.
\begin{theorem} \label{t2.4}
	Let $u_{0} \in {H}^{1}(\mathbb{R}^{3})$ and $g \in L^{\infty}_{loc}(\mathbb{R},H^{1}(\mathbb{R}^{3}))$. If $\|u_{0}\|_{{H}^{1}}$ is small enough, then there exists an interval $I$ and an unique solution $u(t,x)$ of problem \eqref{10}  in $I \times \mathbb{R}^{3}$, with  $u \in C ( I ; {H}^{1}(\mathbb{R}^{3}))$, satisfying
	$$\|\nabla u\|_{W(I)} < \infty, \ \|u\|_{S(I)}< \infty \mbox{ and } \|\nabla u\|_{Z(I)} < \infty.$$
\end{theorem}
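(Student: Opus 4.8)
The plan is to construct the solution by a standard fixed-point argument in a suitable Strichartz space, exploiting the smallness of $\|u_0\|_{H^1}$ to control the energy-critical nonlinearity $|u|^4u$. First I would set up the Duhamel formulation: a solution of \eqref{10} is a fixed point of
\[
\Phi(u)(t) = e^{it\Delta}u_0 - i\int_0^t e^{i(t-\tau)\Delta}\bigl(|u|^4u\bigr)(\tau)\,d\tau - i\int_0^t e^{i(t-\tau)\Delta}g(\tau)\,d\tau.
\]
I would work on the complete metric space
\[
X_I = \bigl\{ u \in C(I;H^1(\mathbb{R}^3)) : \|u\|_{S(I)} + \|\nabla u\|_{W(I)} + \|\nabla u\|_{Z(I)} + \|u\|_{L^\infty_t H^1_x} \le M \bigr\}
\]
for a small radius $M$ and a short interval $I = [0,T']$ to be chosen, with the metric inherited from a weaker norm (say $\|u-v\|_{W(I)}$, or the $S(I)$ norm) so that contraction is cheap while the closed ball is stable.

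The key estimates are, in order: (i) by Strichartz \eqref{item_i} applied to the pair that matches $W(I)$ and $Z(I)$, together with Lemma \ref{sobolev} to pass between $L^{10}_tL^{10}_x$ and the gradient-in-$L^{10}_tL^{30/13}_x$ norm, one bounds the linear evolution terms $e^{it\Delta}u_0$ and the $g$-Duhamel term by $C\|u_0\|_{H^1} + C\,T'^{\theta}\|g\|_{L^\infty_t H^1_x}$ for some positive power $\theta$ gained from Hölder in time on the short interval; (ii) for the nonlinear Duhamel term I would use the inhomogeneous Strichartz estimate \eqref{item_ii} with a dual admissible exponent, the pointwise bound $\bigl|\nabla(|u|^4u)\bigr| \lesssim |u|^4|\nabla u|$, and Hölder in space to write $\||u|^4\nabla u\|_{L^{r'}_x} \le \|u\|_{L^{10}_x}^4\|\nabla u\|_{L^{?}_x}$ with exponents chosen so that the time integrability also closes — this is exactly the algebra that makes $\dot H^1$ the critical space in dimension three. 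The upshot is $\|\Phi(u)\|_{X_I} \le C\|u_0\|_{H^1} + CT'^{\theta}\|g\|_{L^\infty_t H^1_x} + C\|u\|_{X_I}^5$, and a similar difference estimate $\|\Phi(u)-\Phi(v)\| \le C(\|u\|_{X_I}^4 + \|v\|_{X_I}^4)\|u-v\|$.

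Then I would close the argument: choose $M = 2C\|u_0\|_{H^1}$ (after first absorbing the $g$-term by taking $T'$ small, or by noting $\|u_0\|_{H^1}$ small suffices if $g$ is itself treated as part of the small data), require $\|u_0\|_{H^1}$ small enough that $CM^4 \le 1/2$; then $\Phi$ maps $X_I$ into itself and is a contraction, yielding the unique fixed point $u$. Continuity $u \in C(I;H^1)$ follows from the Strichartz bounds and dominated convergence in the Duhamel integral, and the three claimed finiteness statements $\|\nabla u\|_{W(I)}, \|u\|_{S(I)}, \|\nabla u\|_{Z(I)} < \infty$ are built into membership in $X_I$. The main obstacle — really the only delicate point — is the bookkeeping in step (ii): verifying that the Hölder exponents in space for $|u|^4\nabla u$ produce a pair whose conjugate is genuinely $L^2$-admissible (so that \eqref{item_ii} applies) and that the companion time exponents are consistent, i.e., that the scaling really is critical and no loss in $T'$ is available on the nonlinear term; one must also be slightly careful that Lemma \ref{sobolev}, as stated with the $L^{30/13}_x$ gradient norm, is the correct endpoint to interface with the $Z(I)$ norm. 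Everything else is routine contraction-mapping machinery.
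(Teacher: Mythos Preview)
Your proposal is correct and follows essentially the same approach as the paper: Duhamel formulation, a contraction on a ball defined by the norms $\|u\|_{L^\infty_t H^1_x}$, $\|u\|_{S(I)}$, $\|\nabla u\|_{W(I)}$, $\|\nabla u\|_{Z(I)}$, with the forcing $g$ absorbed by shrinking $|I|$ so that $C_I\|g\|_{L^\infty_t H^1_x}\le C\|u_0\|_{H^1}$, and the nonlinear term closed via H\"older and Strichartz with the dual pairs $(\tfrac{10}{7},\tfrac{10}{7})$ and $(2,\tfrac{6}{5})$ together with Lemma~\ref{sobolev}. The only cosmetic difference is that the paper runs the contraction in the full composite norm rather than in a weaker metric, but the estimates and closure are identical.
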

\begin{proof}
	We follow the ideas from \cite{KeMe}. Assume, without loss of generality, that $t_{0} = 0$. Observe that the Cauchy problem \eqref{10} is equivalent to the integral equation (Duhamel's formula) 
	$$u(t) = e^{it\Delta} u_{0} - \int_{0}^{t} e^{i(t-\tau)\Delta}[\pm|u|^{4}u +g] \ d\tau.$$
	Define 
	$$|||u|||= \sup_{t \in I} \|u(t)\|_{L^{2}} + \sup_{t \in I}\|\nabla u(t)\|_{L^{2}} + \|u\|_{S(I)} + \|\nabla u\|_{W(I)} + \|\nabla u\|_{Z(I)}.$$
	For $K>0$ to be conveniently chosen later on, consider the set $$B_{K} := \Big\{u(t,x)\text{ on } I \times \mathbb{R}^{3} : |||u|||\leq K \Big\}.$$ 
	
	We want to show that the operator $\Phi_{u_{0}} : B_{K} \longrightarrow B_{K}$ defined by $$\Phi_{u_{0}}(u) = e^{it\Delta}u_{0} - \int_{0}^{t} e^{i(t-\tau)\Delta}[\pm|u|^{4}u +g] \ d\tau$$ has a fixed point for $K$ small enough. To this end, first,  we prove that the operator $\Phi_{u_{0}}$ reproduces the ball $B_{K}$. 
	Indeed, observe that
		\begin{eqnarray*}
			\| \Phi_{u_{0}}(u)\|_{L^{2}_{x}} &\leq& \| e^{it\Delta}u_{0}\|_{L^{2}_{x}} + \Big\| \int_{0}^{t}  e^{i(t-\tau )\Delta}[\pm|u|^{4} u+ g] \ d\tau \Big\|_{L^{2}_{x}} \\
		& \leq & \| u_{0}\|_{L^{2}} + C\||u|^{4} u\|_{L^{1}_{t}L^{2}_{x}} + \| g\|_{L^{1}_{t}L^{2}_{x}}\\
			& \leq & C\| u_0\|_{{H}^{1}} + C\|u\|_{S(I)}^{5} + C_{I}\| g\|_{L^{\infty}_{t}H^{1}_{x}}
		\end{eqnarray*}		
	and
		\begin{eqnarray*}
			\|\nabla \Phi_{u_{0}}(u)\|_{L^{2}_{x}} &\leq& \|\nabla e^{it\Delta}u_{0}\|_{L^{2}_{x}} + \Big\| \int_{0}^{t} \nabla e^{i(t-\tau )\Delta}[\pm|u|^{4}u + g] \ d\tau \Big\|_{L^{2}_{x}}\\
			& \leq & \|\nabla u_{0}\|_{L^{2}} + C\|\nabla|u|^{4} u\|_{L^{\frac{10}{7}}_{t}L^{\frac{10}{7}}_{x}} + \|\nabla g\|_{L^{1}_{t}L^{2}_{x}}\\
			& \leq & C\|u_{0}\|_{{H}^{1}} + C\|u\|_{S(I)}^{4} \|\nabla u\|_{W(I)} + C_{I}\| g\|_{L^{\infty}_{t}H^{1}_{x}}.
		\end{eqnarray*}
	
		Choosing the length of $I$ small enough such that $ C_{I} \| g\|_{L^{\infty}_{t}H^{1}_{x}} \leq A$ (where $A$ will be chosen later), we have 
		\begin{eqnarray*}
			\|\nabla \Phi_{u_{0}}(u)\|_{L^{2}_{x}} &\leq& C\|u_{0}\|_{{H}^{1}} + CK^{5} + A. 
		\end{eqnarray*}
		Secondly, notice that
		\begin{eqnarray*}
			\|\nabla \Phi_{u_{0}}(u)\|_{W(I)} &\leq& \|\nabla e^{it\Delta}u_{0}\|_{W(I)} + \Big\| \int_{0}^{t} \nabla e^{i(t-\tau )\Delta}[\pm|u|^{4}u + g] \ d\tau \Big\|_{W(I)} \\
			& \leq & \|\nabla u_{0}\|_{L^{2}} + C\|\nabla|u|^{4} u\|_{L^{\frac{10}{7}}_{t}L^{\frac{10}{7}}_{x}} + \|\nabla g\|_{L^{1}_{t}L^{2}_{x}}.
		\end{eqnarray*}
		Due to H\"{o}lder's inequality with $p=\frac{7}{4}$ and $q=\frac{7}{3}$, we get
		$$\|\nabla|u|^{4} u\|_{L^{\frac{10}{7}}_{t}L^{\frac{10}{7}}_{x}} \leq C \|u\|^{4}_{S(I)} \|\nabla u\|_{W(I)}.$$
		Thus
		\begin{eqnarray*}
			\|\nabla \Phi_{u_{0}}(u)\|_{W(I)} &\leq&  C\Big(\|\nabla u_{0}\|_{L^{2}} + \|u\|^{4}_{S(I)} \|\nabla u\|_{W(I)} +\|\nabla g\|_{L^{1}_{t}L^{2}_{x}} \Big) \\ 
			& \leq & C\|u_{0}\|_{{H}^{1}} + CK^{5} + C_{I} \| g\|_{L^{\infty}_{t}H^{1}_{x}}.
		\end{eqnarray*}
		Choosing the length of $I$ small enough such that $ C_{I} \| g\|_{L^{\infty}_{t}H^{1}_{x}} \leq A$, one gets
		\begin{eqnarray*}
			\|\nabla \Phi_{u_{0}}(u)\|_{W(I)} &\leq& C\|u_{0}\|_{{H}^{1}} + CK^{5} + A.
		\end{eqnarray*}
		On other hand, using estimate \eqref{item_i} with $q = 10$ and $r = 30/13$, due to the inequality \eqref{item_ii} with the same $q,r$ and $m' =2$ and $n'= \frac{6}{5}$,  H\"{o}lder's inequality gives
		\begin{eqnarray*}
			\|\nabla\Phi_{u_{0}}(u)\|_{Z(I)} & \leq &\|\nabla e^{it\Delta}u_{0}\|_{Z(I)} + \Big\| \int_{0}^{t} \nabla e^{i(t-\tau )\Delta}[\pm|u|^{4}u+ g]\ d\tau \Big\|_{Z(I)} \\
			& \leq &  \|\nabla u_{0}\|_{L^{2}} + C\|\nabla|u|^{4} u\|_{L^{2}_{t}L^{\frac{6}{5}}_{x}} + C\|\nabla g\|_{L^{1}_{t}L^{2}_{x}} \\
			& \leq &  \|\nabla u_{0}\|_{L^{2}}  + C\|\nabla u\|_{Z(I)} \|u\|^{4}_{S(I)} + C\|\nabla g\|_{L^{1}_{t}L^{2}_{x}} \\
			& \leq & C\|u_{0}\|_{H^{1}} + CK^{5} + C_{I} \|g\|_{L^{\infty}_{t}H^{1}_{x}} \\
			&\leq& C \|u_{0}\|_{{H}^{1}} + CK^{5} + A.
		\end{eqnarray*}

				Finally, 
		\begin{eqnarray*}
			\|\Phi_{u_{0}}(u)\|_{S(I)} & \leq & 	\|\nabla\Phi_{u_{0}}(u)\|_{Z(I)} \\
			& \leq& \|\nabla e^{it\Delta}u_{0}\|_{Z(I)} +  \Big\| \int_{0}^{t} \nabla e^{i(t-\tau )\Delta}[\pm|u|^{4}u+ g]\ d\tau \Big\|_{Z(I)} \\
			& \leq &  \|\nabla u_{0}\|_{L^{2}}  + C\|\nabla u\|_{Z(I)} \|u\|^{4}_{S(I)} + C\|\nabla g\|_{L^{1}_{t}L^{2}_{x}} \\
			& \leq &C \|u_{0}\|_{H^{1}} + CK^{5} +  C_{I}\| g\|_{L^{\infty}_{t}H^{1}_{x}}\\
			& \leq & C\|u_{0}\|_{H^{1}} + CK^{5} + A,
		\end{eqnarray*}
		since $C_{I}\leq \frac{A}{\| g\|_{L^{\infty}_{t}H^{1}_{x}} }$, again. Summing up, we get
		\begin{eqnarray*}
			|||\Phi_{u_{0}}(u)|||& \leq & C\|u_{0}\|_{{H}^{1}} + CK^{5} + A\leq K 
		\end{eqnarray*}
		as long as $\|u_{0}\|_{{H}^{1}}\leq \frac{K}{2C}-\frac{K^5}{2}$ and $A \leq \frac{K}{2}-C\frac{K^5}{2},$ where $K<(1/C)^{1/4}$. This proves that the image of the ball $B_{K}$ under the operator $\Phi_{u_{0}}$
	 is contained within the ball $B_{K}$.
		
		Next, to show that $\Phi_{u_{0}}$ is a contraction, denoting $f(u)=|u|^{4}u$, we get
			\begin{eqnarray*}
				\| \Phi_{u_{0}}(u) - \Phi_{u_{0}}(v)\|_{L^{2}_{x}} & \leq & C\|f(u) -  f(v)\|_{L^{1}_{t}L^{2}_{x}} \\
				& \leq & C\|u-v\|_{S(I)}\big( \|u\|^{4}_{S(I)} + \|v\|^{4}_{S(I)}\big).
				\end{eqnarray*}
Moreover, we also have
		\begin{eqnarray*}
			\|\nabla \Phi_{u_{0}}(u) - \nabla\Phi_{u_{0}}(v)\|_{L^{2}_{x}} & \leq & C\|\nabla f(u) - \nabla f(v)\|_{L^{\frac{10}{7}}_{t}L^{\frac{10}{7}}_{x}} \\
			& \leq & C\Biggl( \Big\||u|^{4}|\nabla u - \nabla v| \Big\|_{L^{\frac{10}{7}}_{t}L^{\frac{10}{7}}_{x}} +  \Big\||u-v| | u |^{3}  |\nabla v| \Big\|_{L^{\frac{10}{7}}_{t}L^{\frac{10}{7}}_{x}} \\
			&  & \mbox{  } + \Big\||u-v| | v |^{3}  |\nabla v| \Big\|_{L^{\frac{10}{7}}_{t}L^{\frac{10}{7}}_{x}} \Biggr) \\
			& \leq & C \Biggl( \|u\|_{S(I)}^{4}\|\nabla u - \nabla v\|_{W(I)} + \|u-v\|_{S(I)} \| \nabla v\|_{W(I)} \|u\|^{3}_{S(I)} + \\ & & \mbox{  } + \|u-v\|_{S(I)} \| \nabla v\|_{W(I)} \|v\|^{3}_{S(I)}\Biggr) \\
			& \leq & CK^{4} \|  u - v\|_{S(I)}+ CK^{4}\|\nabla u - \nabla v\|_{W(I)},
		\end{eqnarray*}
		and
		\begin{eqnarray*}
			\|\nabla \Phi_{u_{0}}(u) - \nabla\Phi_{u_{0}}(v)\|_{W(I)} & \leq & C\|\nabla f(u) - \nabla f(v)\|_{L^{\frac{10}{7}}_{t}L^{\frac{10}{7}}_{x}} \\
			& \leq & C\Biggl( \Big\||u|^{4}|\nabla u - \nabla v| \Big\|_{L^{\frac{10}{7}}_{t}L^{\frac{10}{7}}_{x}} +  \Big\||u-v| | u |^{3}  |\nabla v| \Big\|_{L^{\frac{10}{7}}_{t}L^{\frac{10}{7}}_{x}} \\
			&  & \mbox{  } + \Big\||u-v| | v |^{3}  |\nabla v| \Big\|_{L^{\frac{10}{7}}_{t}L^{\frac{10}{7}}_{x}} \Biggr) \\
			& \leq & C \Biggl( \|u\|_{S(I)}^{4}\|\nabla u - \nabla v\|_{W(I)} + \|u-v\|_{S(I)} \| \nabla v\|_{W(I)} \|u\|^{3}_{S(I)} + \\ & & \mbox{  } + \|u-v\|_{S(I)} \| \nabla v\|_{W(I)} \|v\|^{3}_{S(I)}\Biggr) \\
			& \leq & CK^{4} \|  u - v\|_{S(I)}+ CK^{4}\|\nabla u - \nabla v\|_{W(I)}.
		\end{eqnarray*}
		Following the same reasoning as before,
		\begin{eqnarray*}
			\|\nabla \Phi_{u_{0}}(u) - \nabla\Phi_{u_{0}}(v)\|_{Z(I)} & \leq &  C \Biggl( \|u\|_{S(I)}^{4}\|\nabla u - \nabla v\|_{Z(I)} + \|u-v\|_{S(I)} \| \nabla  v\|_{Z(I)} \|u\|^{3}_{S(I)} + \\ & & \mbox{  } + \|u-v\|_{S(I)} \| \nabla  v\|_{Z(I)} \|v\|^{3}_{S(I)}\Biggr) \\
			& \leq & C K^{4}\|\nabla u - \nabla  v\|_{Z(I)}  +CK^{4}\| u -   v\|_{S(I)}.
		\end{eqnarray*}
		Moreover, by the Sobolev embedding, we have 
		$$\| \Phi_{u_{0}}(u) - \Phi_{u_{0}}(v)\|_{S(I)} \leq 	\|\nabla \Phi_{u_{0}}(u) - \nabla\Phi_{u_{0}}(v)\|_{Z(I)} \leq C K^{4}\|\nabla u - \nabla  v\|_{Z(I)}  +CK^{4}\| u -   v\|_{S(I)}.$$
		Summing up yields
		\begin{eqnarray*}
			|||\Phi_{u_{0}}(u) - \Phi_{u_{0}}(v)||| & \leq & C K^{4}\|\nabla u - \nabla  v\|_{Z(I)}  +CK^{4}\| u -   v\|_{S(I)} + CK^{4}\|\nabla u - \nabla v\|_{W(I)} \\
			& \leq & C K^{4}\|\nabla u - \nabla  v\|_{Z(I)}  +CK^{4}\| u -   v\|_{S(I)} + CK^{4}\|\nabla u - \nabla v\|_{W(I)} \\
			&    & \mbox{    } + C K^{4}\sup_{t \in I}\|\nabla u(t) - \nabla v(t)\|_{L^{2}_{x}} \\ 
			& \leq & CK^{4} |||u-v|||
		\end{eqnarray*} 
		Thus, if $K>0$ is such that $CK^{4} <1$, 
		then $\Phi_{u_{0}}$ is a contraction in $B_K$ and, therefore,
		has a unique fixed point, i.e., problem \eqref{10} has a local solution defined on a 
		interval $[0, T ]$. 
		\end{proof}
	\begin{remark}
		Note that if $u^{1}$ and $u^{2}$ are solutions of \eqref{10} on an interval $I=[0,T]$ with  $u^{1}(0)= u^{2}(0)$, so $u^{1}\equiv u^{2}$ on $I$. Indeed,  partitioning the interval $I$ into a finite colection of small subintervals satisfying all the conditions imposed in the demonstration of the Theorem \eqref{t2.4}, the uniqueness of the fixed point gives an interval $\widetilde{I}$ so that  $u^{1}(t)= u^{2}(t)$ for all $t \in \widetilde{I}$. Juxtaposing these intervals and applying the proof of Theorem \eqref{t2.4} to each one we obtain that $u^{1}(t)= u^{2}(t)$, for all $t \in I$. This allows us to define a maximal interval $I_{u_{0}}$ where the solution is defined.
	\end{remark}

We prove now existence of solutions for the ${H}^{1}$ critical nonlinear Schr\"{o}dinger equation with a modified damping term, that is, changing $g$ by $\varphi(x)\Lambda^{-1}(\varphi(x)\partial_{t}u)$ in the system \eqref{10}, where $\Lambda$ is the natural isomorphism between the real spaces $H^{1}$ and $H^{-1}$ given by $\Lambda (v)=(v, \, \cdot)_{H^1}$. The local result is the following. 
			\begin{theorem}\label{finiteexis}
			Let $T>0$, $u_{0} \in {H}^{1}(\mathbb{R}^{3})$, 
			and $\varphi(x)\in C^{\infty}(\mathbb{R}^{3})$ defined in \eqref{varphi}. If $\|u_{0}\|_{H^{1}}$ is small enough, then there exists an unique $u \in C(\mathbb{R}_{+},{H}^{1}(\mathbb{R}^{3}))$, solution of the system
			\begin{equation} \label{eq 8}
				\left\{
				\begin{array}{ll}
					i\partial_{t} u + \Delta u -|u|^{4}u -\varphi(x)\Lambda^{-1}(\varphi(x)\partial_{t} u) = 0, & (t,x) \in [0,T] \times \mathbb{R}^{3}, \\
					u(0) = u_{0}, &x \in \mathbb{R}^{3},
				\end{array}
				\right.
			\end{equation}
			with  $\|u\|_{S([0,T])} < \infty , \ \  \|\nabla u\|_{W([0,T])} < \infty  \ \mbox{  and  } \ \|\nabla u\|_{Z([0,T])} < \infty $
			for all $T <\infty$.
		\end{theorem}
		\begin{proof} Let us start to proving that the operator $Jv = (i-\varphi(x)\Lambda^{-1} \varphi(x))v$ is a pseudodifferential operator of order 0 which defines an isomorphism on the space $H^{s}(\mathbb{R}^{3})$, for $s \in \mathbb{R}$ and, in particular, on $L^{p}(\mathbb{R}^{3})$.
		
		Indeed, we may decompose $J$ as $J = I + J_{1}$, where $J_{1}$ is an anti-self-adjoint operator on $L^{2}(\mathbb{R}^{3})$. Consequently, $J$ defines an isomorphism on $L^{2}(\mathbb{R}^{3})$ and, by ellipticity, also on $H^{s}(\mathbb{R}^{3})$ for any $s > 0$. Furthermore, the inverse operator $J^{-1}$ (viewed, for instance, as acting on $L^{2}([0,T]\times \mathbb{R}^{3})$) is a pseudodifferential operator of order zero and satisfies the identity $J^{-1} = I - J_{1}J^{-1}$, which establishes the claim.
	
			\smallskip
			Now, denote $v=Ju$ and write the system \eqref{eq 8} as
					\begin{equation*}
				\left\{
				\begin{array}{ll}
					\partial_{t} v -i \Delta v -R_{0}v -|u|^{4}u = 0,&(t,x) \in [0,T] \times \mathbb{R}^{3}, \\
					v(0) = v_{0}=Ju_{0}, &x \in \mathbb{R}^{3},
				\end{array}
				\right.
			\end{equation*}
			where $R_{0} = \Delta J_{1}J^{-1}$ is a pseudodifferential operator of order 0.  This Cauchy problem is equivalent to the integral equation
			\begin{equation}\label{duhamel}
				v(t) = e^{it\Delta}v_{0} + \int_{0}^{t}e^{i(t-\tau)\Delta}[R_{0}v + |u|^{4}u] \ d\tau.
			\end{equation} 
Let $I=[0,T]$ and consider the set $X_{I}$ with norm
			$$\|v\|_{X_{I}} = \sup_{t \in I} \|\nabla v(t)\|_{L^{2}} + \sup_{t \in I} \| v(t)\|_{L^{2}} +\|v\|_{S(I)} + \|\nabla v\|_{W(I)}.$$
			We now set $B_{K} = \Big\{v \in X_{I}; \ \ \|v\|_{X_{I}} \leq K\Big\}$, where $K>0$. By Duhamel's formula, we define the functional
			\begin{equation}\label{eq 9}
				\Phi_{u_{0}}(v)(t) = e^{it\Delta}v_{0} + \int_{0}^{t} e^{i(t-\tau)\Delta}R_{0}v\ d\tau + \int_{0}^{t} e^{i(t-\tau)\Delta} |u|^{4}u \ d\tau 
			\end{equation}
			Our goal is to show that this functional has a fixed point, considering $\Phi_{u_{0}}$ in a suitable ball $B_{K} $. We first show that we can choose $K$ such that $\Phi(v): B_{K} \longrightarrow B_{K}$. 
			
			In fact, using the formula \eqref{eq 9}, we get
			\begin{eqnarray*}
				\|\nabla \Phi_{u_{0}}(v)\|_{L^{2}_{x}} & \leq & \|\nabla e^{it\Delta}v_{0}\|_{L^{2}} + \Big\| \int_{0}^{t} \nabla e^{i(t-\tau)\Delta }|u|^{4}u\ d\tau \Big\|_{L^{2}_{x}} +  \Big\| \int_{0}^{t} \nabla e^{i(t-\tau)\Delta }R_{0}v\ d\tau \Big\|_{L^{2}_{x}}\\
				& \leq &  \|\nabla v_{0}\|_{L^{2}} + C\|\nabla|u|^{4}u\|_{L^{\frac{10}{7}}_{t} L^{\frac{10}{7}}_{x}} + C \|\nabla R_{0}v\|_{L^{1}_{t}L^{2}_{x}}\\
				& \leq &\|\nabla v_{0}\|_{L^{2}} + 
				C \|u\|_{S(I)}^{4}\|\nabla u\|_{W(I)} + C\|[\nabla,R_{0}]v\|_{L^{1}_{t}L^{2}_{x}} + C\|R_{0}\nabla v\|_{L^{1}_{t}L^{2}_{x}}.
			\end{eqnarray*}
			On the other hand, observe that
			\begin{eqnarray*}
				\|\nabla u\|_{W(I)}& = & \|[\nabla,J^{-1}]v + J^{-1}\nabla v\|_{W(I)} \\
				& \leq & C \|v\|_{W(I)}  + C \|\nabla v\|_{W(I)}.
			\end{eqnarray*}
			Then, 
			\begin{equation*}
				\begin{split}
					\|\nabla \Phi_{u_{0}}(v)\|_{L^{2}_{x}} \leq &\|\nabla v_{0}\|_{L^{2}} + 
					C \|v\|_{S(I)}^{4}\big(  \|v\|_{W(I)}  +  \|\nabla v\|_{W(I)}\big) + C\|[\nabla,R_{0}]v\|_{L^{1}_{t}L^{2}_{x}} + \|R_{0}\nabla v\|_{L^{1}_{t}L^{2}_{x}} \\
					\leq & \| v_{0}\|_{H^{1}} + 
					C \|v\|_{S(I)}^{4} \|v\|_{W(I)}  + C \|v\|_{S(I)}^{4} \|\nabla v\|_{W(I)}    \\&+ CT\sup_{t \in I}\|v(t)\|_{L^{2}} + CT\sup_{t \in I}\|\nabla v(t)\|_{L^{2}}. 
				\end{split}
			\end{equation*} 
			By interpolation, one has
			\begin{eqnarray*}
				\|v(t)\|_{L^{\frac{10}{3}}} & \leq & \|v(t)\|^{\frac{2}{5}}_{L^{2}}\|v(t)\|^{\frac{3}{5}}_{L^{6}},
			\end{eqnarray*}
			which ensures
			\begin{eqnarray*}
				\int_{0}^{T}\|v(t)\|^{\frac{10}{3}}_{L^{\frac{10}{3}}} \ dt 
				\leq T \sup_{t \in I} \|v(t)\|^{\frac{4}{3}}_{L^{2}}\sup_{t \in I} \|v(t)\|^{2}_{L^{6}} 
				\leq  T \|v\|^{\frac{4}{3}}_{X_{I}}  \|v\|^{2}_{X_{I}} 
				\leq  T \|v\|^{\frac{10}{3}}_{X_{I}} ,
			\end{eqnarray*}
			implying that
			$$ \|v\|_{W(I)} \leq T^{\frac{3}{10}} \|v\|_{X_{I}}.$$
			Hence,
			\begin{eqnarray*}
				\|\nabla \Phi_{u_{0}}(v)\|_{L^{2}_{x}} 
				& \leq & \|\nabla v_{0}\|_{L^{2}} + 
				C T^{\frac{3}{10}} \|v\|_{S(I)}^{4} \|v\|_{X_{I}}  + C \|v\|_{S(I)}^{4} \|\nabla v\|_{W(I)} \\
				&   & \mbox{   }+ CT\sup_{t \in I}\|v(t)\|_{L^{2}} + CT\sup_{t \in I}\|\nabla v(t)\|_{L^{2}} \\
				& \leq & C \|v_{0}\|_{H^{1}} + CT^{\frac{3}{10}}\|v\|^{5}_{X_{I}} + C\|v\|^{5}_{X_{I}} + CT \|v\|_{X_{I}},
			\end{eqnarray*} 
			where, for these inequalities, we have used Lemma \ref{strichartz},  with $(q,r) =\displaystyle{\Big(\frac{10}{3},\frac{10}{3}\Big)}$, $(q,r) =\displaystyle{\Big(\frac{10}{3},\frac{10}{3}\Big)}$ and $(m,n)= (\infty,2)$. Note that, 
			\begin{eqnarray*}
				\|\Phi_{u_{0}}(v)\|_{L^{2}_{x}} 
				& \leq & C \|v_{0}\|_{H^{1}} + CT\|v\|^{5}_{X_{I}} + CT \|v\|_{X_{I}}
			\end{eqnarray*}
			and
			\begin{eqnarray*}
				\|\nabla \Phi_{u_{0}}(v)\|_{W(I)} 
				& \leq &\|\nabla v_{0}\|_{L^{2}} + 
				C \|u\|_{S(I)}^{4}\|\nabla u\|_{W(I)} + C\|[\nabla,R_{0}]v\|_{L^{1}_{t}L^{2}_{x}} + C\|R_{0}\nabla v\|_{L^{1}_{t}L^{2}_{x}}.
			\end{eqnarray*}
			Similarly to before, one can get
			\begin{eqnarray*}
				\|\nabla \Phi_{u_{0}}(v)\|_{W(I)}
				& \leq & \|\nabla v_{0}\|_{L^{2}} + 
				C T^{\frac{3}{10}} \|v\|_{S(I)}^{4} \|v\|_{X_{I}}  + C \|v\|_{S(I)}^{4} \|\nabla v\|_{W(I)} \\
				&   & \mbox{   }+ CT\sup_{t \in I}\|v(t)\|_{L^{2}} + CT\sup_{t \in I}\|\nabla v(t)\|_{L^{2}}\\
				& \leq & C \|v_{0}\|_{H^{1}} + CT^{\frac{3}{10}}\|v\|^{5}_{X_{I}} + C\|v\|^{5}_{X_{I}} + CT \|v\|_{X_{I}}.
			\end{eqnarray*} 
			Finally,
			\begin{eqnarray*}
				\|\Phi_{u_{0}}(v)\|_{S(I)}
				& \leq & C \|v_{0}\|_{H^{1}} + CT^{\frac{3}{10}}\|v\|^{5}_{X_{I}} + C\|v\|^{5}_{X_{I}} + CT \|v\|_{X_{I}},
			\end{eqnarray*}
			where we used  Lemma \ref{strichartz}, with $(q,r) =\displaystyle{\Big(10,\frac{30}{13}\Big)}$, $(q,r) =\Big(10,\frac{30}{13}\Big)$ and again $(m,n)= (\infty,2)$.  Putting all these pieces of information together means that 
			\begin{eqnarray*}
				\|\Phi_{u_{0}}(v)\|_{X_{I}} & \leq & C \| v_{0}\|_{H^{1}} 
				+ CT^{\frac{3}{10}}\|v\|^{5}_{X_{I}} + C\|v\|^{5}_{X_{I}} + CT \|v\|_{X_{I}}.
			\end{eqnarray*}
			Now, choosing $T < \min\big\{1,\frac{1}{4C}\big\}$, $A< \frac{K}{8C}$ and $K < \frac{1}{(4C)^{\frac{1}{4}}}$, we conclude that $\Phi_{u_{0}}$ reproduces the ball $B_{K}$ into itself.
			
			Now, let us prove that  $\Phi_{u_{0}}(v)$ is a contraction. To this end, consider the two systems 
			\begin{equation*} 
				\left\{
				\begin{array}{ll}
					i\partial_{t} u + \Delta u - |u|^{4}u -\varphi\Lambda^{-1}\varphi\partial_{t}u= 0,& (t,x) \in [0,T] \times \mathbb{R}^{3}, \\
					u(0) = u_{0}, & x \in \mathbb{R}^{3},
				\end{array}
				\right.
			\end{equation*}
			and 
			\begin{equation*} 
				\left\{
				\begin{array}{ll}
					i\partial_{t} z + \Delta z  - |z|^{4}z -\varphi\Lambda^{-1}\varphi\partial_{t}z= 0, & (t,x) \in [0,T] \times \mathbb{R}^{3}, \\
					z(0) = u_{0}, &x \in \mathbb{R}^{3}.
				\end{array}
				\right.
			\end{equation*}
			Performing the same transformation carried out at the beginning of the proof, we have 
			\begin{equation*}
				\left\{
				\begin{array}{lll}
					\partial_{t} v -i \Delta v -R_{0}v - |u|^{4}u = 0,& (t,x) \in [0,T] \times \mathbb{R}^{3},\\
					v=Ju,\\
					v(0) =v_{0}= Ju_{0}, & x \in \mathbb{R}^{3},
				\end{array}
				\right.
			\end{equation*}
			and 
			\begin{equation*}
				\left\{
				\begin{array}{lll}
					\partial_{t} w -i \Delta w - R_{0}w -|z|^{4}z = 0, & (t,x) \in [0,T] \times \mathbb{R}^{3},\\
					w=Jz,\\
					w(0) = w_{0}=v_{0} = J u_{0}, &x \in \mathbb{R}^{3}.
				\end{array}
				\right.
			\end{equation*}
			Using Duhamel's formula, 
			$$\Phi_{u_{0}}(v) - \Phi_{u_{0}}(w) = \int_{0}^{t}e^{i(t-\tau)\Delta}R_{0}(v-w) \ d\tau - \int_{0}^{t} e^{i(t-\tau)\Delta}i\Big(|u|^{4}u -|z|^{4}z\Big)\ d\tau .$$
			Computations that are analogous to the ones in the previous step ensure
			\begin{eqnarray*}
				\|\nabla \Phi_{u_{0}}(v)-\nabla \Phi_{u_{0}}(w)\|_{L^{2}_{x}}  & \leq & C T^{\frac{3}{10}}K^{4}\| v- w\|_{X_{I}} +  C K^{4}\| v- w\|_{X_{I}} + CT\| v- w\|_{X_{I}}
			\end{eqnarray*}
			and
			\begin{eqnarray*}
				\|\Phi_{u_{0}}(v)-\Phi_{u_{0}}(w)\|_{X_{I}} 
				& \leq & CT^{\frac{1}{2}}K^{4}\|v-w\|_{X_{I}}+ C T^{\frac{3}{10}}K^{4}\| v- w\|_{X_{I}} \\
				&    & \mbox{      }+  C K^{4}\| v- w\|_{X_{I}} + CT\| v- w\|_{X_{I}}.
			\end{eqnarray*}
			These give local existence as long as one chooses small enough constants $T, K$ satisfying $$C(T^{\frac{1}{2}}K^{4} + T^{\frac{3}{10}}K^{4} + K^{4} + T) <1,$$ which ends the proof.
		\end{proof}
	
	It is important to point out that a solution $u=u(t,x)$ to problem \eqref{eq 8} satisfies the energy identity
	\begin{equation} \label{energy}
		E(u)(t_{2}) - E(u)(t_{1}) = - 2 \int_{t_{1}}^{t_{2}} \Big\|\Lambda^{-\frac{1}{2}}\varphi(x)\partial_{t}u\Big\|^{2}_{L^{2}} \ dt,
	\end{equation}
	where $E(u)(t)$ is decreasing and, therefore, system \eqref{eq 8} is dissipative. Now, to prove global existence, notice that, by identity \eqref{energy}, one has
	$$E(t) \leq E(0), \ \ \forall t \in I.$$
Thus, the energy remains bounded for all $t \geq 0$. Exploiting this property, together with the finite-time blow-up criterion stated below, we conclude that the maximal interval of existence for system solutions \eqref{eq 8} cannot be finite.
	
	\begin{lemma}[Finite blow-up criterion] Let $T(u_{0})>0$ and $I_{0}=[0,T(u_{0})]$ be the maximal interval for which the solution $u$ for system \eqref{eq 8} is defined on $I_{0}$. If $T(u_{0})<+\infty$, then
		$$\|u\|_{S([0,T(u_{0})])} = +\infty.$$
	\end{lemma}
	\begin{proof}
		We argue by contradiction. Assume that $T(u_{0})<+\infty$ and $\|u\|_{S([0,T(u_{0})])} < +\infty$. Let $\|u\|_{S([0,T(u_{0})])} = M$ and, for $\varepsilon>0$ which will be chosen below, we choose $N=N(\varepsilon)$ intervals $I_{j}$ such that 
		$$\bigcup_{j=1}^{N} I_{j} = [0,T(u_{0})]$$
		with $\|u\|_{S(I_{j})} \leq \varepsilon.$ The first step is to show that 
		\begin{equation}\label{est45}
			\|u\|_{L^{\infty}[0,T(u_{0})];\dot{H}^{1}(\mathbb{R}^{3})} + \|u\|_{L^{\infty}[0,T(u_{0})];L^{2}(\mathbb{R}^{3})} + \|\nabla u\|_{W([0,T(u_{0})])} + \|\nabla u\|_{Z([0,T(u_{0})])} < + \infty.
		\end{equation}
		We write the integral equation \eqref{duhamel} on each interval $I_{j}$ to obtain 
		\begin{eqnarray*}
			&  &\sup_{t \in I_{j}}\|u(t)\|_{\dot{H}^{1}(\mathbb{R}^{3})} + \sup_{t \in I_{j}}\|u(t)\|_{L^{2}(\mathbb{R}^{3})} + \|\nabla u\|_{W(I_{j})} + \|\nabla u\|_{Z(I_{j})} \\
			& \leq & C\|u(t_{j})\|_{\dot{H}^{1}(\mathbb{R}^{3})} + C\|u\|^{4}_{S(I_{j})}\|\nabla u\|_{Z(I_{j})}+ C\|u\|^{4}_{S(I_{j})}\|u\|_{S(I_{j})}\\
			& \leq & C\|u(t_{j})\|_{\dot{H}^{1}(\mathbb{R}^{3})} + C\|u\|^{4}_{S(I_{j})}\|\nabla u\|_{Z(I_{j})}\\
			& \leq & C\|u(t_{j})\|_{\dot{H}^{1}(\mathbb{R}^{3})} + C\varepsilon^{4}\|\nabla u\|_{Z(I_{j})},
		\end{eqnarray*}
		where $t_{j}$ is a fixed point in $I_{j}$. The desired estimate \eqref{est45} follows if we choose $\varepsilon>0$ such that $C\varepsilon^{4} <\frac{1}{2}$. For the second step, we choose a sequence $(t_{n})_{n\in \mathbb{N}}$ such that $t_{n} \rightarrow T(u_{0})$ as $n\rightarrow \infty$. Let $T_{*}$ be the length of the existence interval given by Theorem \ref{finiteexis}. Let $n$ be large enough but fixed such that
		$$T(u_{0}) - t_{n} = \varepsilon_{0}$$
		with $\varepsilon_{0}>0$ satisfying $\varepsilon_{0} = \frac{T_{*}}{2}$. Since $E(t_{n}) \leq E(0)$ for all $t_{n}\geq 0$, Theorem \ref{finiteexis} may be applied for the interval $[0,T(u_{0})+ \varepsilon]$ whose length is $T_{*}$. However, this contradicts the maximality of $T(u_{0})$ and concludes the proof.
	\end{proof}

		
		\section{Controllability for the critical nonlinear Schr\"{o}dinger equation}\label{Sec3}
		In this section, we prove Theorem \ref{main} using a duality strategy which reduces the controllability problem \eqref{eq1} to proving an observability inequality, the so-called ``Hilbert Uniqueness Method" \cite{Lions}, for the solutions of the linear system
		\begin{equation}\label{linear}
			\left\{
			\begin{array}{lr}
				i\partial_{t}u + \Delta u = \varphi(x) h(x,t), \  \mbox{on  } (0,T) \times \mathbb{R}^{3}, \\
				u(0) = u_{0},
			\end{array}
			\right.
		\end{equation}
		where $\varphi=\varphi(x)$ is defined by \eqref{varphi}.
		
		\subsection{Linear Schr\"{o}dinger equation: Null controllability}
		In this section, inspired by ideas of Rosier and Zhang \cite{RoZhaJDE}, we will show Theorem \ref{lin}. Note that the null controllability of system \eqref{linear} follows from the observability inequality, namely, 
		\begin{equation}\label{c1}
			\|v_{0}\|^{2}_{H^{-1}} \leq C \int_{0}^{T} \|\varphi v(t)\|^{2}_{H^{-1}} \ dt,
		\end{equation}
		where $v(x,t)$ is a solution to the adjoint system associated to \eqref{linear}
		\begin{equation} \label{c2} 
			\left\{
			\begin{array}{lr}
				i\partial_{t} v + \Delta v = 0 \ \mbox{on} \ \mathbb{R}\times \mathbb{R}^{3} , \\
				v(0) = v_{0}  \in H^{-1}(\mathbb{R}^{3}) .
			\end{array}
			\right.
		\end{equation}
		The observability inequality \eqref{c1} is given by the following result. 
		\begin{proposition}\label{obs2} Let $\varphi$ be a $C^{\infty}$ real function on $\mathbb{R}^{3}$ as in \eqref{varphi}. Then, for every $T > 0$, there exists a constant $C = C(T) > 0$ such that inequality \eqref{c1}	holds for every solution $v(t, x)$ of system \eqref{c2}.
		\end{proposition}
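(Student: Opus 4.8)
The plan is to obtain the observability inequality \eqref{c1} by a compactness--uniqueness argument, in which the dispersive local smoothing of the Schr\"odinger group plays the role that Rellich compactness plays on a bounded domain, while the Carleman estimate of \cite{MeOsRo} (see also \cite{Laurent1}) supplies the qualitative unique continuation that drives the argument. Throughout one works with $v=e^{it\Delta}v_0$, for which $\|v(t)\|_{H^{-1}}=\|v_0\|_{H^{-1}}$ for all $t$, and it suffices to produce a constant $C=C(T)$.

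Arguing by contradiction, I would take $v_0^n\in H^{-1}$ with $\|v_0^n\|_{H^{-1}}=1$ and $\int_0^T\|\varphi v^n(t)\|_{H^{-1}}^2\,dt\to0$, extract a weakly convergent subsequence $v_0^n\rightharpoonup v_0$ in $H^{-1}$, and set $v=e^{it\Delta}v_0$. From the local smoothing estimate $\|v^n\|_{L^2((0,T);H^{-1/2}_{\mathrm{loc}})}\lesssim\|v_0^n\|_{H^{-1}}$ together with the bound on $\partial_t v^n=i\Delta v^n$ in $L^2((0,T);H^{-3}_{\mathrm{loc}})$, the Aubin--Lions lemma gives, along a subsequence, $v^n\to v$ strongly in $L^2((0,T);H^{-1}_{\mathrm{loc}})$. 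Since also $\varphi v^n\to0$ in $L^2((0,T);H^{-1}(\mathbb{R}^3))$, the limit satisfies $\varphi v\equiv0$; as $\varphi\equiv1$ on $\mathbb{R}^3\setminus B_{R+1}$, this means $v(t,\cdot)$ is supported in $\overline{B_{R+1}}$ for every $t\in(0,T)$.

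To turn $v_0^n\rightharpoonup v_0$ into strong convergence --- the only point at which the unboundedness of $\mathbb{R}^3$ genuinely matters --- I would prove tightness: $\sup_n\|\chi_\rho v_0^n\|_{H^{-1}}\to0$ as $\rho\to\infty$, where $\chi_\rho$ is a smooth cut-off to $\{|x|\ge\rho\}$. This is exactly where the hypothesis $\varphi\equiv1$ outside $B_{R+1}$ is used: a part of $v_0^n$ concentrated at distance $\rho_n\to\infty$ must cover a distance at least $\rho_n-R-1$ before it can enter $B_{R+1}$, so, irrespective of its frequency content, it stays at radius $>R+1$ --- where $\varphi$ sees it in full --- for all but an $o(1)$ fraction of $(0,T)$, hence contributes a fixed positive amount to $\int_0^T\|\varphi v^n(t)\|_{H^{-1}}^2\,dt$, contradicting that this tends to $0$. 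Tightness combined with the local strong convergence above yields $v_0^n\to v_0$ in $H^{-1}(\mathbb{R}^3)$, so $\|v_0\|_{H^{-1}}=1$. On the other hand $v=e^{it\Delta}v_0$ solves $i\partial_t v+\Delta v=0$ and vanishes on $(0,T)\times(\mathbb{R}^3\setminus B_{R+1})$, so the Carleman unique continuation of \cite{MeOsRo} forces $v\equiv0$, whence $v_0=0$ --- a contradiction. This proves \eqref{c1}; since the null controllability of \eqref{linear} in $H^1$ is dual to \eqref{c1}, Theorem~\ref{lin} follows, the $C(\mathbb{R};H^1)$ regularity of the control being recovered with the help of Proposition~\ref{App1}.

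The step I expect to be the main obstacle is the tightness in the previous paragraph. On a bounded domain the passage from weak to strong convergence is immediate from Rellich's theorem; on $\mathbb{R}^3$ one must instead quantify the fact that mass cannot escape to spatial infinity while remaining invisible to $\varphi$, which amounts to a non-trapping/propagation estimate for $e^{it\Delta}$ relative to the exterior of a ball --- provable, say, by a positive commutator (Mourre-type) argument or directly from the explicit kernel of $e^{it\Delta}$ --- and this genuinely has to be established rather than cited. An alternative that sidesteps tightness is to apply the Carleman estimate on an expanding family of balls $B_\rho$ and absorb the traces on $\partial B_\rho$ into an interior observation over an annulus contained in $\{\varphi=1\}$, but this reintroduces trace and regularity issues for $H^{-1}$ data.
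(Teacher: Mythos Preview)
Your compactness--uniqueness scheme is reasonable but differs substantially from the paper's proof, and the step you flag as the obstacle really is a gap as written. The paper never proves tightness of $v_0^n$ in $H^{-1}$ and never invokes Kato local smoothing. Instead it (i) first establishes the observability inequality in $H^{1}$ by a multiplier identity (multiply the equation by $q\cdot\nabla\overline w+\tfrac12\,\overline w\,\mathrm{div}\,q$ with $q(x)=x$ on $B_{R+2}$) together with a compactness--uniqueness argument on bounded balls; (ii) then transfers the result to $H^{-1}$ by setting $w_n=(1-\Delta)^{-1}v_n$, which again solves the free equation, and applying the $H^1$ inequality to $w_n$ while controlling the commutator $[\varphi,1-\Delta]$. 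The escape-to-infinity issue is handled by a bare energy estimate at the $H^{-2}$ level: writing $\tilde v_n=\varphi(x/2)v_n$ one has $i\partial_t\tilde v_n+\Delta\tilde v_n=[\Delta,\varphi(x/2)]v_n$, and since $\supp\nabla\varphi(\cdot/2)\subset\{\varphi=1\}$ the right-hand side is controlled by $\|\varphi v_n\|_{H^{-1}}$, giving $\|\varphi(x/2)v_n(0)\|_{H^{-2}}\to0$; the remaining piece $(1-\varphi(x/2))v_n(0)$ has fixed compact support and is therefore compact from $H^{-1}$ into $H^{-2}$.

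Concerning your argument: the heuristic ``must cover a distance $\rho_n-R-1$ before entering $B_{R+1}$'' is finite-propagation-speed language that does not match Schr\"odinger dynamics, and, more seriously, even if you did establish tightness of $v_0^n$ in $H^{-1}$, this together with $v^n\to v$ in $L^2((0,T);H^{-1}_{\mathrm{loc}})$ does \emph{not} yield $v_0^n\to v_0$ in $H^{-1}$ --- Aubin--Lions only gives $L^2$-in-time convergence, and you have no additional compactness of $v_0^n$ itself in $H^{-1}_{\mathrm{loc}}$. The good news is that tightness is unnecessary in your scheme. Fix $\chi\in C^\infty$ with $\chi=0$ on $B_{R+1}$ and $\chi=1$ outside $B_{R+2}$; then $\chi=\chi\varphi$, so $\|\chi v^n\|_{L^2_tH^{-1}}\le C\|\varphi v^n\|_{L^2_tH^{-1}}\to0$, while $(1-\chi)v^n\to(1-\chi)v=0$ in $L^2((0,T);H^{-1}(\mathbb{R}^3))$ by your local-smoothing/Aubin--Lions step. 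Hence $\sqrt{T}=\|v^n\|_{L^2_tH^{-1}}\to0$, the desired contradiction, without ever upgrading the convergence of $v_0^n$. A minor additional point: to invoke the Carleman estimate of \cite{MeOsRo} on the limit $v$ you need smoothness, which the paper obtains from Proposition~\ref{App1} after noting that $v(t_0,\cdot)$ is compactly supported for $t_0\in(0,T)$.
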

		\begin{proof}
			We will split the proof into several steps.
			
			\vspace{0.2cm}
			
			\noindent\textbf{First step:} \textit{$H^{1}$--observability}.
			
			\begin{lemma}\label{obs1} Consider the system
				\begin{equation}  \label{c25}
					\left\{
					\begin{array}{lr}
						i\partial_{t} w + \Delta w = 0, \  \mbox{on  } (0,T) \times \mathbb{R}^{3}, \\
						w(0) = w_{0} \in H^{1}(\mathbb{R}^{3}).
					\end{array}
					\right.
				\end{equation}
				There exists a constant $C > 0$ such that for each $w_{0} \in H^{1}(\mathbb{R}^{3})$, the solution $w(t)$ of \eqref{c25} satisfies
				\begin{equation} \label{c26}
					\|w_{0}\|^{2}_{H^{1}(\mathbb{R}^{3})} \leq C \int_{0}^{T} \|\varphi w(t)\|^{2}_{H^{1}(\mathbb{R}^{3})} \ dt .
				\end{equation}
			\end{lemma}
			\begin{proof}
				Let $q \in C_{0}^{\infty}(\mathbb{R}^{3};\mathbb{R}^3)$ such that
				\begin{equation*} 
					q(x) = 	\left\{
					\begin{array}{lr}
						x, \ if \ |x| \leq R + 2  , \\
						0, \ if \ |x| \geq R+3 .
					\end{array}
					\right.
				\end{equation*}
				Multiplying the equation in \eqref{c25} by $q\cdot \nabla \overline{w} + \frac{1}{2} \overline{w}(\mathrm{div}_{x}q)$, taking the real part and integrating by parts gives
				\begin{equation} \label{c27}
					\begin{split}
						\frac{1}{2} \im\int_{\mathbb{R}^{3}} (wq \cdot \nabla \overline{w}) \ dx \Bigg\arrowvert_{0}^{T}& + \frac{1}{2} \re \int_{0}^{T} \int_{\mathbb{R}^{3}} w \nabla (\mathrm{div}_{x}q) \cdot \nabla \overline{w} \ dxdt\\& + \re \int_{0}^{T} \int_{\mathbb{R}^{3}} \sum_{j,k =1}^{3} \Big(\frac{\partial q_{k}}{\partial x_{j}}\frac{\partial \overline{w}}{\partial x_{k}}\frac{\partial w}{\partial x_{j}}\Big) \ dxdt = 0,
					\end{split}
				\end{equation}
				where we have used the fact that the function $q(x)$ has compact support. 
				Notice that there is conservation of energy in $H^{1}(\mathbb{R}^{3})$
				\begin{equation} \label{c28}
					\|w(t)\|^{2}_{H^{1}(\mathbb{R}^{3})} = \|w(0)\|^{2}_{H^{1}(\mathbb{R}^{3})}=\|w_0\|^{2}_{H^{1}(\mathbb{R}^{3})}, \quad t \in [0,T].
				\end{equation}
				It follows from \eqref{c27} that
				\begin{eqnarray*}
					\int_{0}^{T} \int_{B_{R+2}(0)} |\nabla w|^{2} \ dx dt & \leq & C_{\varepsilon} \Bigg(	\int_{0}^{T} \int_{B_{R+3}(0)\backslash B_{R+2}(0)} |\nabla w|^{2} \ dx dt + \|w_0\|^2_{L^2(\mathbb{R}^{3})} \Bigg) \\
					&    &  \mbox{    } + \varepsilon \int_{0}^{T} \|w(t)\|^{2}_{H^{1}(\mathbb{R}^{3})} \ dt,
				\end{eqnarray*}
				for any $\varepsilon>0$ and some constant $C_{\varepsilon} >0$. We also have
				$$\|w(t)\|^{2}_{H^{1}(\mathbb{R}^{3})} \leq C_R \bigg(\int_{B_{R+2}(0)} |\nabla w|^{2} \ dx + \|\varphi w(t)\|^{2}_{H^{1}(\mathbb{R}^{3})} \bigg)
				$$
				Indeed, observe that
				\begin{eqnarray*}
					\|\nabla w(t)\|^{2}_{L^2(\mathbb{R}^{3})} & = & 	\| \nabla w(t)\|^{2}_{L^2(B_{R+1}(0))} + 	\|\nabla w(t)\|^{2}_{L^2(\mathbb{R}^{3}\backslash B_{R+1}(0))} \\
					& \leq & 	\| \nabla w(t)\|^{2}_{L^2(B_{R+2}(0))}  + 	\|\varphi w(t)\|^{2}_{H^1(\mathbb{R}^{3})}
				\end{eqnarray*}
				and that 
				\begin{eqnarray*}
					\|w(t)\|^{2}_{L^2(\mathbb{R}^{3})} & = & 	\|w(t)\|^{2}_{L^2(B_{R+1}(0))} + 	\|w(t)\|^{2}_{L^2(\mathbb{R}^{3}\backslash B_{R+1}(0))} \\
					& \leq & |B_{R+1}(0)|^{\frac{2}{3}} \|w(t)\|^2_{L^6(B_{R+1}(0))}  + 	\|\varphi w(t)\|^{2}_{L^2(\mathbb{R}^{3}\backslash B_{R+1}(0))} \\
					& \leq & |B_{R+1}(0)|^{\frac{2}{3}} \|w(t)\|^2_{L^6(\mathbb{R}^3)}  + 	\|\varphi w(t)\|^{2}_{H^1(\mathbb{R}^{3})}
				\end{eqnarray*}
				and using the Gagliardo-Nirenberg-Sobolev inequality
				\[ \|w(t)\|_{L^6(\mathbb{R}^3)} \leq C_1 \|\nabla w(t)\|_{L^2(\mathbb{R}^3)} \]  
				shows the claim. Moreover, we also obtain
				\begin{equation} \label{c29}
					\|w_0\|^2_{H^{1}(\mathbb{R}^{3})} = \frac{1}{T} \int_{0}^{T} 	\|w(t)\|^{2}_{H^{1}(\mathbb{R}^{3})} \ dt \leq C \Bigg( \int_{0}^{T} \|\varphi w(t)\|^{2}_{H^{1}(\mathbb{R}^{3})} \ dt + \|w_0\|^{2}_{L^{2}(\mathbb{R}^{3})}\Bigg).
				\end{equation}
				Indeed we have 
				\begin{eqnarray*} 
					\int_{0}^{T}\|w(t)\|^{2}_{H^{1}(\mathbb{R}^{3})} \ dt  & \leq & C_R \Bigg(  \int_{0}^{T} \|\nabla w(t)\|^{2}_{L^{2}(B_{R+2}(0))} \ dt + 	\int_{0}^{T}\|\varphi w(t)\|^{2}_{H^{1}(\mathbb{R}^{3})} \ dt \Bigg) \\
					& \leq & C_{\varepsilon}C_R  \Bigg(	\int_{0}^{T} \int_{B_{R+3}(0)\backslash B_{R+2}(0)} |\nabla w|^{2} \ dx dt +  \|w_0\|^2_{L^{2}(\mathbb{R}^{3})} \Bigg) \\
					&    &  \mbox{    } + 	C_R \int_{0}^{T}\|\varphi w(t)\|^{2}_{H^{1}(\mathbb{R}^{3})} \ dt 
					+ C_R \varepsilon \int_{0}^{T} \|w(t)\|^{2}_{H^{1}(\mathbb{R}^{3})} \ dt \\
					& \leq & C_{\varepsilon} C_R \|w_0\|^2_{L^2(\mathbb{R}^2)} + (C_{\varepsilon}+1)C_R \int_{0}^{T} \|\varphi w(t)\|^{2}_{H^{1}(\mathbb{R}^{3})} dt \\
					&    &  \mbox{    } +  C_R\varepsilon \int_{0}^{T} \|w(t)\|^{2}_{H^{1}(\mathbb{R}^{3})} \ dt 
				\end{eqnarray*}
				and it suffices to choose $\varepsilon$ small enough to absorb the second right-hand side term in the left-hand side term. 
				So, it remains to show
				\begin{equation} \label{c30}
					 \|w_0\|^2_{L^{2}(\mathbb{R}^{3})}  \leq c \int_{0}^{T} \|\varphi w(t)\|^{2}_{H^{1}(\mathbb{R}^{3})} dt
				\end{equation}
				to achieve Lemma \ref{obs1}.  
				To this end, let us argue by contradiction, that is, suppose that \eqref{c30} does not hold. If this is the case, there exists a sequence $\{w_{n,0}\}$ in $H^{1}(\mathbb{R}^{3})$ such that the corresponding sequence of solutions $\{w_{n}\}$ of \eqref{c25} satisfies
				\begin{equation} \label{c31}
					1 = 	 \|w_{n,0}(t)\|_{L^{2}(\mathbb{R}^{3})}^2 \geq n \int_{0}^{T} \|\varphi w_{n}(t)\|^{2}_{H^{1}(\mathbb{R}^{3})} dt, \ \ n=1,2,...
				\end{equation}
				Due to inequalities \eqref{c29} and \eqref{c31}, we get 
				$$	\|w_{n,0}\|^{2}_{H^{1}(\mathbb{R}^{3})} = \frac{1}{T} \int_0^T \|w_{n}(t)\|^{2}_{H^{1}(\mathbb{R}^{3})} dt  \leq C \Bigg( \int_{0}^{T} \|\varphi w_{n}(t)\|^{2}_{H^{1}(\mathbb{R}^{3})} \ dt + \|w_{n,0}\|^{2}_{L^{2}(\mathbb{R}^{3})}\Bigg) \leq 2C,$$
				hence the sequence $\{w_{n}(0)= w_{n,0}\}_{n\in\mathbb{N}}$ is bounded in $H^{1}(\mathbb{R}^{3})$. Extracting a subsequence and still denoting it by $\{w_{n,0}\}$, we may assume that
				$$ w_{n,0} \rightharpoonup w_{0} \mbox{  weakly in } H^{1}(\mathbb{R}^{3})$$
				and
				$$ w_{n} \rightharpoonup w \mbox{  weakly in  } L^{2}\big((0,T);H^{1}(\mathbb{R}^{3})\big),$$
				where $w \in C\big([0,T];H^{1}(\mathbb{R}^{3})\big)$ is a solution of \eqref{c25}. By inequality \eqref{c31}, 
				$$ \varphi w_{n} \rightarrow 0 \textrm{ in } L^{2}\big((0,T);H^{1}(\mathbb{R}^{3})\big) \textrm{ strongly.} $$ 
				Since $\varphi w_{n} \rightharpoonup 0$ in $L^{2}\big((0,T);H^{1}(\mathbb{R}^{3})\big)$ weakly, we conclude that  $ \varphi w \equiv 0$ on $(0,T) \times \mathbb{R}^{3}$. Therefore,
				\begin{equation*}
					w\equiv 0,  \ \ |x| > R+ 1, \ t \in (0,T). 
				\end{equation*}
				According to Proposition \ref{App1}, one has $w\in C^{\infty}(\mathbb{R}^{3} \times (0,T))$.  Now, since a function $w$ satisfying $i\partial_{t}w +\Delta w$=0 also satisfy $w\equiv0$ on $\mathbb{R}^{3}\backslash B_{R+1}(0)$, we are in a position to use the unique continuation property for the Schr\"{o}dinger equation showed in \cite[Proposition 2.1]{MeOsRo} to guarantee that
				\begin{equation*}
					w \equiv 0 \mbox{  on  } \mathbb{R}^{3} \times (0,T).
				\end{equation*}
				Since $\varphi w_{n} \rightarrow 0$ strongly in $L^{2}\big((0,T);H^{1}(\mathbb{R}^{3})\big)$, we get 
				\begin{equation} \label{c35}
					w_{n} \rightarrow 0 \mbox{  strongly in } L^{2}\big((0,T);H^{1}(\mathbb{R}^{3}\backslash B_{R+1}(0))\big).
				\end{equation}
				On the other hand, taking into account \eqref{c29}, we have 
				\begin{eqnarray*}
					\frac{1}{T} \int_{0}^{T} \|w_{n}(t)\|^{2}_{H^{1}(B_{R+1}(0))} \ dt 
					& \leq & C \Big(\int_{0}^{T} \|\varphi w_{n}(t)\|^{2}_{H^{1}(\mathbb{R}^{3})} \ dt + \|w_{n,0}(t)\|^{2}_{L^{2}(\mathbb{R}^{3})}  \Big) \leq 2C,
				\end{eqnarray*}
				and using the equation  \eqref{c29}, 
				\begin{eqnarray*}
					\int_{0}^{T} \|\partial_{t}w_{n}(t)\|^{2}_{H^{-1}(B_{R+1}(0))} \ dt & = &	\int_{0}^{T} \|\Delta w_{n}(t)\|^{2}_{H^{-1}(B_{R+1}(0))} \ dt \\
					& \leq & C\int_{0}^{T} \|w_{n}(t)\|^{2}_{H^{1}(B_{R+1}(0))} dt 
				\end{eqnarray*}
				as well as  
				\begin{equation*}
					\begin{split}
						\int_{0}^{T} \|\nabla w_{n}(t)\|^{2}_{H^{-1}(B_{R+1}(0))} \ dt 
						&   \leq C \int_{0}^{T} \| w_{n}(t)\|^{2}_{H^{1}(\mathbb{R}^{3})} \ dt.
					\end{split}
				\end{equation*}
				Therefore, 
				\begin{equation*}
					w_{n} \mbox{  is bounded in } L^{2}\big((0,T);H^{1}( B_{R+1}(0))\big) \cap H^{1}\big((0,T);H^{-1}(B_{R+1}(0))\big).
				\end{equation*}
				Due to Aubin's lemma (see \cite{Simon}) and the convergence \eqref{c35}, we conclude that for a subsequence, still denoted by $\{w_{n}\}$,
				$$w_{n} \rightarrow w = 0 \mbox{  strongly in } L^{2}\big((0,T);L^{2}(\mathbb{R}^{3})\big)$$
				which contradicts \eqref{c31}. So, the estimate \eqref{c26} follows from \eqref{c29} and \eqref{c30}
				showing the lemma. 
			\end{proof}

			\noindent\textbf{Second step:} \textit{Weak observability inequality.}
			
			\vspace{0.1cm}
			We prove a bound which is weaker than the observability inequality \eqref{c1}.
			\begin{lemma}
				Let $ v$ be the solution of system \eqref{c2} with $v_{0} \in H^{-1}  (\mathbb{R}^{3})$. Then,
				\begin{equation} \label{c3}
					\|v_{0}\|^{2}_{H^{-1}} \leq C \Biggr(\int_{0}^{T} \|\varphi v(t)\|^{2}_{H^{-1}} \ dt + \|(1-\varphi(x/2))v_{0}\|^{2}_{H^{-2}} \Biggr).
				\end{equation}
			\end{lemma}
			\begin{proof}
				Again, let us argue by contradiction. If inequality \eqref{c3} is not verified, there exists a sequence $\{v_{n}\}$ of solutions to problem \eqref{c2} in $C([0,T];H^{-1}(\mathbb{R}^{3}))$ such that
				\begin{equation} \label{c4}
					1 = \|v_{n}(0)\|^{2}_{H^{-1}} \geq n \Biggr(\int_{0}^{T} \|\varphi v_{n}(t)\|^{2}_{H^{-1}} \ dt + \|(1-\varphi(x/2))v_{n}(0)\|^{2}_{H^{-2}} \Biggr).
				\end{equation}
				Up to a subsequence, we may assume that
				\begin{equation*}
					v_{n} \rightharpoonup v \ \mbox{  in  }\ L^{\infty}((0,T);H^{-1}(\mathbb{R}^{3})) \mbox{     weak*}
				\end{equation*}
				and
				\begin{equation} \label{c6}
					v_{n}(0) \rightharpoonup v(0) \mbox{   in   } H^{-1}(\mathbb{R}^{3}) \mbox{   weak,   }
				\end{equation}
				where $v \in C([0,T];H^{-1}(\mathbb{R}^{3}))$ is a solution of problem \eqref{c2}. By inequality \eqref{c4}, 
				$$
				\varphi v_{n} \rightarrow 0 \text{ (strongly) in } L^{2}((0,T);H^{-1}(\mathbb{R}^{3})).
				$$ 
				Since $$\varphi v_{n} \rightharpoonup \varphi v \text{ in $L^{\infty}((0,T);H^{-1}(\mathbb{R}^{3}))$ weak*,}$$ we conclude that 
				$\varphi v \equiv 0$. Therefore, $v(t,x)=0$ for $|x|> R+1$ and $t \in (0,T)$.  So, using the unique continuation property as in Step 1, we get that $v \equiv 0$. In particular, $v(0)=0$. 
				
				Now, we claim that
				\begin{equation} \label{c7}
					\|\varphi(x/2)v_{n}(0)\|^{2}_{H^{-2}} \leq C \int_{0}^{T} \|\varphi v_{n}(t)\|^{2}_{H^{-1}} \ dt.
				\end{equation}
				To prove \eqref{c7}, introduce the function $\tilde{v}_{n}(x,t) = \varphi(x/2)v_{n}(x,t)$ which satisfies
				$$i\partial_{t}\tilde{v}_{n} + \Delta \tilde{v}_{n} = f_{n}$$ 
				where $f_{n} =  [\Delta \varphi(x/2)]v_{n} + 2\nabla \varphi(x/2) \nabla v_{n}$.
				Then, the fact that $\supp[\varphi(x/2)] \subset \{\varphi = 1\}$ yields
				\begin{equation*}
					\begin{split}
						\|\tilde{v}_{n}(0)\|^{2}_{H^{-2}(\mathbb{R}^{3})}  \leq & c \Biggr( \int_{0}^{T} \|\tilde{v}_{n}(t)\|^{2}_{H^{-2}(\mathbb{R}^{3})} \ dt + \int_{0}^{T} \|f_{n}(t)\|^{2}_{H^{-2}(\mathbb{R}^{3})} \ dt \Biggr) \\
						\leq & c \int_{0}^{T}\|\varphi v_{n}(t)\|^{2}_{H^{-1}(\mathbb{R}^{3})},
					\end{split}
				\end{equation*}
				giving \eqref{c7}. Now, using \eqref{c4}, one has
				\begin{eqnarray*}
					\|v_{n}(0)\|^{2}_{H^{-2}} & \leq & 2\Big(\|\varphi(x/2)v_{n}(0)\|^{2}_{H^{-2}} + \|(1-\varphi(x/2))v_{n}(0)\|^{2}_{H^{-2}}\Big) \\
					& \leq & c \int_{0}^{T} \|\varphi v_{n}(t)\|^{2}_{H^{-1}} \ dt + 2\|(1-\varphi(x/2))v_{n}(0)\|^{2}_{H^{-2}} \rightarrow 0 ,
				\end{eqnarray*}
				that is, 
				\begin{equation} \label{c8}
					v_{n}(0) \rightarrow 0 \mbox{   strongly in   } H^{-2}(\mathbb{R}^{3}).
				\end{equation}
				
				Let $w_{n} =(1-\Delta)^{-1}v_{n}$. Then $w_{n} \in C([0,T];H^{1}(\mathbb{R}^{3}))$ is a solution of the equation \eqref{c2}. By the convergences \eqref{c6} and \eqref{c8}, we can ensure that
				\begin{equation*}
					w_{n}(0) \rightharpoonup 0 \mbox{  in  } H^{1}(\mathbb{R}^{3}) \mbox{  weakly  }
				\end{equation*} 
				and
				\begin{equation}\label{c10}
					w_{n} \rightarrow 0 \mbox{  in  } C([0,T];L^{2}(\mathbb{R}^{3})) \mbox{  strongly.  }
				\end{equation}
				Now, split $\varphi w_{n}$ as
				$$\varphi w_{n} = (1-\Delta)^{-1}(\varphi v_{n}) - (1-\Delta)^{-1}[\varphi,(1-\Delta)]w_{n}.$$
				Observe that  the operator $ [\varphi,(1-\Delta)]$ maps $L^{2}(\mathbb{R}^{3})$ continuously into $H^{-1}(\mathbb{R}^{3})$. So, due to the convergence \eqref{c10}, we get that 
				\begin{equation}\label{c11}
					(1-\Delta)^{-1}[\varphi,(1-\Delta)]w_{n} \rightarrow 0 \mbox{  in  } C([0,T];H^{1}(\mathbb{R}^{3})).	
				\end{equation}
				On the other hand, by \eqref{c4},
				\begin{equation}\label{c12}
					(1-\Delta)^{-1}(\varphi v_{n})\rightarrow 0 \mbox{  in  } L^{2}((0,T);H^{1}(\mathbb{R}^{3})).
				\end{equation}
				Therefore, by the convergences \eqref{c11} and \eqref{c12} above, it follows that
				\begin{equation*}
					\varphi w_{n} \rightarrow 0 \mbox{  in  } L^{2}((0,T);H^{1}(\mathbb{R}^{3})).
				\end{equation*}
				Since $w_{n}$ satisfies \eqref{c2}, using Lemma \ref{obs1}, more precisely, the observability inequality \eqref{c26}, we conclude that 
				$$w_{n}(0) \rightarrow 0 \mbox{  in  } H^{1}(\mathbb{R}^{3}) \mbox{  strongly,  }$$
				and so
				$$v_{n}(0) \rightarrow 0 \mbox{  in  } H^{-1}(\mathbb{R}^{3}) \mbox{  strongly,}$$
				which is a contradiction to the fact that $ \|v_{n}(0)\|^{2}_{H^{-1}}=1$, for all $n$. This finishes the proof.
			\end{proof}
			
			\noindent\textbf{Third step}: \textit{Proof of the observability inequality \eqref{c1}.}
			
			\vspace{0.1cm}
			If \eqref{c1} is false, then there exists a sequence $\{v_{n}\}$ of solutions to
			\eqref{c2} in $C([0,T];H^{-1}(\mathbb{R}^{3}))$ such that
			\begin{equation} \label{c13}
				1 = \|v_{n}(0)\|^{2}_{H^{-1}} \geq n \int_{0}^{T} \|\varphi v_{n}(t)\|^{2}_{H^{-1}} \ dt, \ \forall n\geq 0.
			\end{equation}
			Extracting a subsequence, still denoted by the same indexes, we have that
			\begin{equation*} 
				v_{n} \rightharpoonup v \ \mbox{  in  }\ L^{\infty}((0,T);H^{-1}(\mathbb{R}^{3})) \mbox{     weak*}
			\end{equation*}
			and
			\begin{equation*} 
				v_{n}(0) \rightharpoonup v(0) \mbox{   in   } H^{-1}(\mathbb{R}^{3}) \mbox{   weak, }
			\end{equation*}
			for some solution $v \in C([0,T];H^{-1}(\mathbb{R}^{3}))$ of the system \eqref{c2}. Note that 
			$$
			\varphi v_{n} \rightharpoonup \varphi v \text{ in $L^{\infty}(0,T;H^{-1}(\mathbb{R}^{3}))$ weak*}
			$$ and this, combined with \eqref{c13} ($\varphi v_{n} \rightarrow 0$ in $L^{2}\big((0,T);H^{-1}(\mathbb{R}^{3})\big)$), yields $\varphi v \equiv 0$ and, hence, $ v \equiv 0$ for $|x| >R+1$, $t \in (0,T)$. 
			So, by the unique continuation property as in Step 2, we deduce that $v \equiv 0$ on $\mathbb{R}^{3} \times (0, T)$. 
			On the other hand, the sequence ${(1 - \varphi(x/2))v_{n}(0)}$ is bounded in $H^{-1}(\mathbb{R}^{3})$ and has
			compact support contained in $B_{2R+2}(0)$. Therefore, extracting a subsequence, we may assume that it converges strongly in $H^{-2}(\mathbb{R}^{3})$. Moreover, its limit is necessarily $0$ since $${(1 - \varphi(x/2))v_{n}(0)}\rightharpoonup 0 \text{ in $H^{-2}(\mathbb{R}^{3})$}.$$ Using inequality \eqref{c3}, we conclude that $\|v_{n}(0)\|_{H^{-1}} \rightarrow 0$, which contradicts \eqref{c13}. This proves the 
			desired observability inequality \eqref{c2} and finishes the proof of Proposition \ref{obs2}.
		\end{proof}
		
		We are now in a position to prove Theorem \ref{lin}.
		
		\begin{proof}[Proof of Theorem \ref{lin}] We prove Theorem \ref{lin} using Hilbert's uniqueness method. First, note that since the Schr\"{o}dinger equation \eqref{linear} is backward well-posed, we may assume that $u(T) =0$ without loss of generality.
			
			Now, consider the two systems
			\begin{equation}\label{kkkkk}
				\left\{
				\begin{array}{lr}
					i\partial_{t} u + \Delta u = \varphi(x) h(x,t) \ \mbox{on} \ [0,T] \times \mathbb{R}^{3} , \\
					u(T) = 0 , 
				\end{array}
				\right.
			\end{equation}
			with $\varphi(x)$ satisfying \eqref{varphi},  and 
			\begin{equation*}
				\left\{
				\begin{array}{lr}
					i\partial_{t} v + \Delta v = 0 \ \mbox{on} \ [0,T] \times \mathbb{R}^{3} , \\
					v(0) = v_{0} \in H^{-1}(\mathbb{R}^{3}).
				\end{array}
				\right.
			\end{equation*}
			Multiplying the equation of the first system by $\overline{v}$ and integrating by parts, we obtain
			$$ i\int_{\mathbb{R}^{3}} \Big[\overline{v}(T)u(T) -\overline{v_{0}}u(0)\Big] \ dx = \int_{0}^{T} \int_{\mathbb{R}^{3}}\varphi(x) h(x,t) \overline{v(x,t)}\ dx dt.$$
			Hence, taking $L^2(\mathbb{R}^3)$ as pivot space, one has
			\begin{equation}\label{c16a}
				\langle v_{0},-iu(0)\rangle=  \int_{0}^{T} \langle \varphi(x)v,h(t) \rangle \ dt,
			\end{equation}
			where $\langle \cdot,\cdot \rangle$ denotes the duality pairing between $H^{-1}\left(\mathbb{R}^3\right)$ and $H^1\left(\mathbb{R}^3\right)$. 
	Now, consider the isomorphism between the real spaces $H^{1}$ and $H^{-1}$ defined as follows:
			\begin{equation*}\label{defiso}
				\Lambda : H^{1}(\mathbb{R}^{3}) \rightarrow H^{-1}(\mathbb{R}^{3})
			\end{equation*}
			 giving by 
			 \begin{equation}\label{defiso}
			 	\Lambda (v)=(v, \, \cdot)_{H^1}= \int_{\mathbb{R}^{3}} v(x)\phi(x) \ dx + \int_{\mathbb{R}^{3}} \nabla v(x) \cdot \nabla\phi(x) \ dx, \ \ \phi \in H^{1}(\mathbb{R}^{3}).
			 \end{equation}
For any $v_0 \in H^{-1}\left(\mathbb{R}^3\right)$, let $h(t)=\Lambda^{-1}(\varphi v(t))\left(h \in C\left([0, T] ; H^1\left(\mathbb{R}^3\right)\right)\right)$ and let $u$ be the corresponding solution of system \eqref{kkkkk}. Finally, set 
			\begin{equation} \label{gamma}
					\Gamma : H^{-1}(\mathbb{R}^{3}) \rightarrow H^{1}(\mathbb{R}^{3})
			\end{equation}
			giving by $\Gamma\left(v_0\right)=-i u(\cdot, 0)$. Then, we have
			$$
			\left\langle v_0, \Gamma\left(v_0\right)\right\rangle=\int_0^T\|\varphi v(t)\|_{H^{-1}\left(\mathbb{R}^3\right)}^2 d t \geqslant c\left\|v_0\right\|_{H^{-1}\left(\mathbb{R}^3\right)}^2 ,
			$$
			in view of the observability inequality \eqref{c1} and identity \eqref{c16a}. Since the operator $\Gamma$ is continuous and coercive, it follows from the Lax-Milgram theorem that $\Gamma$ defines an isomorphism, and this concludes the proof of Theorem \ref{lin}.
		\end{proof}
		
		\subsection{Nonlinear system: Proof of Theorem \ref{main}} The proof is based on a perturbation argument due to E. Zuazua \cite{Zuazua1}. To use it, consider the following two Schr\"odinger systems with initial data in $H^{-1}$ and null initial data, namely
		\begin{equation*}
			\left\{
			\begin{array}{lr}
				i\partial_{t} \Phi + \Delta \Phi = 0 \mbox{ on} \ [0,T] \times \mathbb{R}^{3} , \\
				\Phi(0) = \Phi_{0} \in H^{-1}(\mathbb{R}^{3}) , 
			\end{array}
			\right.
		\end{equation*}
		and
		\begin{equation}\label{c18}
			\left\{
			\begin{array}{lr}
				i\partial_{t} u + \Delta u \pm|u|^{4}u= A\Phi \mbox{ on} \ [0,T] \times \mathbb{R}^{3} , \\
				u(T) = 0,
			\end{array}
			\right.
		\end{equation}
		respectively, where $A$ is defined by $$A\Phi: = \varphi(x)\Lambda^{-1}(\varphi(x) \Phi),$$
		with $\Lambda$ as in \eqref{defiso}, this choice ensures that the control is spatially localized in the region where 
		$\varphi$ is supported.

		Now, define the operator 
		\begin{equation*}
			\begin{aligned}
				\mathcal{L}: H^{-1}(\mathbb{R}^{3}) & \rightarrow H^{1}(\mathbb{R}^{3}) \\
				\Phi_0 & \mapsto \mathcal{L} \Phi_0=u_0=u(0) .
			\end{aligned}
		\end{equation*}
		The goal is then to show that $\mathcal{L}$ is onto in a small neighborhood of the origin of $H^{1}(\mathbb{R}^{3})$. To this end, 
		split $u$ as $u = v + \Psi$, where is $\Psi$ a solution of
		\begin{equation*}
			\left\{
			\begin{array}{lr}
				i\partial_{t} \Psi + \Delta \Psi = A\Phi \ \mbox{on} \ [0,T] \times \mathbb{R}^{3},\\
				\Psi(T) = 0 ,
			\end{array}
			\right.
		\end{equation*}
		and $v$ is a solution of
		\begin{equation}\label{c20}
			\left\{
			\begin{array}{lr}
				i\partial_{t} v + \Delta v =\pm |u|^{4}u\ \mbox{on} \ [0,T] \times \mathbb{R}^{3},\\
				v(T) = 0.
			\end{array}
			\right.
		\end{equation}
		Clearly $u, v$ and $\Psi$ belong to $C([0,T], H^{1}(\mathbb{R}^{3}))\cap L^{10}([0,T];L^{10}(\mathbb{R}^{3}))$ and $u(0) = v(0) + \Psi(0)$. We may write
		$$\mathcal{L} \Phi_{0} = \mathcal{J} \Phi_{0} + \Gamma\Phi_{0},$$
		where $\mathcal{J} \Phi_{0} = v_{0}$ and $\Gamma$ is the isomorphism defined in \eqref{gamma}. Observe that $\mathcal{L}\Phi_{0} = u_{0}$, or equivalently, $\Phi_{0} = -\Gamma^{-1}\mathcal{J}\Phi_{0} + \Gamma^{-1}u_{0}$.
		
		Now, define the operator 
		$$
		\begin{aligned}
			\mathcal{B}: H^{-1}(\mathbb{R}^3) & \rightarrow H^{-1}(\mathbb{R}^3)  \\
			\Phi_0 & \rightarrow \mathcal{B} \Phi_0=\Gamma^{-1} u_0-\Gamma^{-1} \mathcal{J} \Phi_0,
		\end{aligned}
		$$
		where we are taking into account that $\Gamma$ is the linear control isomorphism between $H^{-1}$ and $H^1$, due to Theorem \ref{lin}. 
		
		Now, the goal is to prove that $\mathcal{B}$  has a fixed point near the origin of $H^{-1}(\mathbb{R}^3)$. More precisely, let us prove that if $\|u_{0}\|_{H^{1}}$ is small enough, then $\mathcal{B}$ is a contraction on a small ball $B_{K}$ of $H^{-1}(\mathbb{R}^3)$. We may assume $T < 1$, and we will denote by $C>0$ any constant that may have its numerical value changed line by line. Since $\Gamma$ is an isomorphism, we have
		\begin{equation}\label{bb}
			\begin{split}
				\|\mathcal{B}\Phi_{0}\|_{H^{-1}}  \leq & \|\Gamma^{-1}\mathcal{J} \Phi_{0}\|_{H^{-1}} + \|\Gamma^{-1}u_{0}\|_{H^{-1}} \\
				\leq & C \left(\|\mathcal{J}\Phi_{0}\|_{H^{1}} + \|u_{0}\|_{H^{1}}\right)\\
				\leq & C  \left(\|v(0)\|_{H^{1}} + \|u_{0}\|_{H^{1}}\right).
			\end{split}
		\end{equation}
		
		\noindent\textbf{Claim 1:} There exists $C>0$ such that 
		\begin{equation}\label{c21}
			\|v(0)\|_{H^{1}} \leq C  \|\nabla u\|^{5}_{L^{10}_{t}L^{\frac{30}{13}}_{x}}.
		\end{equation}
		
		Indeed, note that due to the classical energy estimate for system \eqref{c20}, Strichartz estimates (see Lemma \ref{strichartz}) and a Sobolev embedding (see Lemma \ref{sobolev}), we have
		\begin{eqnarray*}
			\|v(0)\|_{L^{2}} & \leq & \|v(T)\|_{L^{2}} + \Big\|\int_{0}^{t} e^{i(t-\tau)\Delta} |u|^{4}u \ d\tau\Big\|_{L^{2}_{x}} \\
			& \leq & C\|u^{5}\|_{L^{1}_{t}L^{2}_{x}} \\
			& \leq & C\|u\|^{5}_{L^{10}_{t}L^{10}_{x}}\\
			& \leq & C\|\nabla u\|^{5}_{L^{10}_{t}L^{\frac{30}{13}}_{x}}
		\end{eqnarray*}
		and 
		\begin{eqnarray*}
			\|\nabla v(0)\|_{L^{2}} & \leq & \|\nabla v(T)\|_{L^{2}} + \Big\|\int_{0}^{t} e^{i(t-\tau)\Delta}\nabla  |u|^{4}u \ d\tau\Big\|_{L^{2}_{x}} \\
			& \leq & C\|\nabla u\|_{L^{{10}}_{t}L^{\frac{30}{13}}_{x}}\|u\|^{4}_{L^{10}_{t}L^{10}_{x}} \\
			& \leq & \|\nabla u\|^{5}_{L^{{10}}_{t}L^{\frac{30}{13}}_{x}}.
		\end{eqnarray*}
		Thus, 
		$$	\|v(0)\|^{2}_{L^{2}} \leq C\|\nabla u\|^{10}_{L^{10}_{t}L^{\frac{30}{13}}_{x}}$$
		and 
		$$	\|\nabla v(0)\|^{2}_{L^{2}} \leq C\|\nabla u\|^{10}_{L^{10}_{t}L^{\frac{30}{13}}_{x}}.$$
		Here and from now on, the notation $L^{q}_{t}L^{r}_{x}$ denotes mixed space-time spaces $L^{q}([0,T];L^{r}(\mathbb{R}^{3}))$.

		Summing up, we have \eqref{c21}, thus showing Claim 1.
		
		\vspace{0.2cm}
		
		\noindent\textbf{Claim 2:} There exists $C>0$ such that 
		\begin{equation}\label{c22}
			\|\nabla u\|_{L^{10}_{t}L^{\frac{30}{13}}_{x}} \leq C \|\Phi_{0}\|_{H^{-1}}.
		\end{equation}
		
		In fact, applying Lemma \ref{strichartz} to system \eqref{c18}, one gets
		\begin{eqnarray*}
			\|\nabla u\|_{L^{10}_{t}L^{\frac{30}{13}}_{x}}
			& \leq & \|\nabla u(T)\|_{L^{2}} + C \|\nabla u\|_{L^{10}_{t}L^{\frac{30}{13}}_{x}}\|u\|^{4}_{L^{10}_{t}L^{10}_{x}} + C \|\nabla A\Phi\|_{L^{1}_{t}L^{2}_{x}} \\
			& \leq & C \left(\|\nabla u\|^{5}_{L^{10}_{t}L^{\frac{30}{13}}_{x}}+  \|A\Phi\|_{L^{2}_{t}H^{1}_{x}}\right).
		\end{eqnarray*}
		Note that, using the fact that $\Lambda$ is an isomorphism, we get
		$$\|A\Phi\|_{H^{1}} =  \|\varphi\Lambda^{-1}(\varphi\Phi)\|_{H^{1}} \leq C \|\varphi\Phi\|_{H^{-1}}$$
		or, equivalently,  $$\|A\Phi\|_{L_{t}^{2}H_{x}^{1}} \leq \Big(\int_{0}^{T} \|\varphi\Phi\|^{2}_{H^{-1}} \ dt\Big)^{\frac{1}{2}}.$$
		Then, the duality \eqref{c16a} yields
		\begin{eqnarray*}
			\|\nabla u\|_{L^{10}_{t}L^{\frac{30}{13}}_{x}} & \leq & C\|\nabla u\|^{5}_{L^{10}_{t}L^{\frac{30}{13}}_{x}}+ C\Big(\int_{0}^{T} \|\varphi\Phi\|^{2}_{H^{-1}} \ dt\Big)^{\frac{1}{2}} \\
			& \leq & C\|\nabla u\|^{5}_{L^{10}_{t}L^{\frac{30}{13}}_{x}}+ C\Big(\langle \Gamma\Phi_{0},\Phi_{0}\rangle \Big)^{\frac{1}{2}}\\
			& \leq & C\|\nabla u\|^{5}_{L^{10}_{t}L^{\frac{30}{13}}_{x}} + C\Big(\|\Gamma\Phi_{0}\|_{H^{1}}\|\Phi_{0}\|_{H^{-1}} \Big)^{\frac{1}{2}} \\
			& \leq & C\|\nabla u\|^{5}_{L^{10}_{t}L^{\frac{30}{13}}_{x}} + C\Big( \|\Phi_{0}\|^{2}_{H^{-1}} \Big)^{\frac{1}{2}} \\
			& \leq & C\|\nabla u\|^{5}_{L^{10}_{t}L^{\frac{30}{13}}_{x}} +  C\|\Phi_{0}\|_{H^{-1}}.
		\end{eqnarray*}
		Using a bootstrap argument, taking $\|\Phi_{0}\|_{H^{-1}} \leq K$  with $K$ small enough, we get \eqref{c22}, showing Claim 2.
		
		Note that putting together inequalities \eqref{c21} and  \eqref{c22} into \eqref{bb},  we conclude
$$
			\|\mathcal{B}\Phi_{0}\|_{H^{-1}} \leq  C\Big(\|v(0)\|_{H^{1}} + \|u_{0}\|_{H^{1}}\Big) \leq  C\Big(\|\Phi_{0}\|^{5}_{H^{-1}} + \|u_{0}\|_{H^{1}}\Big).
$$
		Then, choosing $K$ small enough so that $CK^5 \leq K/2$ and $\|u_{0}\|_{H^{1}} \leq \frac{K}{2C},$ we get 
		$$\|\mathcal{B}\Phi_{0}\|_{H^{-1}}  \leq K$$
		and, therefore, $\mathcal{B}$ reproduces the closed ball of radius $K$ of $H^{-1}(\mathbb{R})$.
		
		Finally, we prove that $\mathcal{B}$ is a contraction map. To do this, let us study the systems
		\begin{equation*}
			\left\{
			\begin{array}{lr}
				i\partial_{t} (u_{1} -u_{2}) + \Delta (u_{1} -u_{2}) \pm|u_{1}|^{4}u_{1} \pm |u_{2}|^{4}u_{2}= A(\Phi^{1} - \Phi^{2}) , \\
				(u_{1}-u_{2})(T) = 0 , 
			\end{array}
			\right.
		\end{equation*}
		and 
		\begin{equation}\label{c24}
			\left\{
			\begin{array}{lr}
				i\partial_{t} (v_{1} -v_{2}) + \Delta (v_{1} -v_{2}) =\pm|u_{1}|^{4}u_{1} \pm |u_{2}|^{4}u_{2}, \\
				(v_{1}-v_{2})(T) = 0.
			\end{array}
			\right.
		\end{equation}
		As before, we also have
		\begin{equation}\label{bbb1}
			\|\mathcal{B}\Phi_{0}^{1} - \mathcal{B}\Phi_{0}^{2}\|_{H^{-1}}  \leq  C \|v_{1}(0)-v_{2}(0)\|_{H^{1}}.
		\end{equation}
		We now estimate $v_{1}(0)-v_{2}(0)$ in the $H^{1}$-norm. First, applying Lemma \ref{sobolev} yields that
		\begin{equation*}
			\begin{split}
				\|v_{1}(0)-v_{2}(0)\|_{L^{2}}  \leq & \Big\|\int_{0}^{T} e^{i(t-\tau)\Delta} (\pm|u_{1}|^{4}u_{1} \pm |u_{2}|^{4}u_{2} ) \ d\tau\Big\|_{L^{2}_{x}} \\
				\leq & \||u_{1}|^{4}u_{1} - |u_{2}|^{4}u_{2}\|_{L^{1}_{t}L^{2}_{x}} \\
				\leq & C \|u_{1}-u_{2}\|_{L^{5}_{t}L^{10}_{x}} \Big(\|u_{1}\|^{4}_{L^{5}_{t}L^{10}_{x}} + \|u_{2}\|^{4}_{L^{5}_{t}L^{10}_{x}}\Big) \\
				\leq &  C \|u_{1}-u_{2}\|_{L^{10}_{t}L^{10}_{x}}\Big(\|u_{1}\|^{4}_{L^{10}_{t}L^{10}_{x}} + \|u_{2}\|^{4}_{L^{10}_{t}L^{10}_{x}}\Big) \\
				\leq & C \|\nabla u_{1}-\nabla u_{2}\|_{L^{10}_{t}L^{\frac{30}{13}}_{x}}\Big(\|\nabla u_{1}\|^{4}_{L^{10}_{t}L^{\frac{30}{13}}_{x}} + \|\nabla u_{2}\|^{4}_{L^{10}_{t}L^{\frac{30}{13}}_{x}}\Big) \\
				\leq & CK^{4} \|\nabla u_{1}-\nabla u_{2}\|_{L^{10}_{t}L^{\frac{30}{13}}_{x}}
			\end{split}
		\end{equation*}
		since we have 
		\[ \|\nabla u_{j}\|_{L^{10}_{t}L^{\frac{30}{13}}} \leq C\|\Phi_0\|_{H^{-1}} \leq CK \]
		and then applying Strichartz estimates (see Lemma \ref{strichartz}) 
		\begin{equation*}
			\begin{split}
				\|\nabla v_{1}(0)-\nabla v_{2}(0)\|_{L^{2}}  \leq & \Big\|\int_{0}^{T} \nabla e^{i(t-\tau)\Delta} (\pm|u_{1}|^{4}u_{1} \pm |u_{2}|^{4}u_{2} ) \ d\tau\Big\|_{L^{2}_{x}} \\
				\leq & \|\nabla (|u_{1}|^{4}u_{1} - |u_{2}|^{4}u_{2} )\|_{L^{2}_{t}L^{\frac{6}{5}}_{x}} \\
				\leq & C \|\nabla (u_{1}-u_{2})\|_{L^{10}_{t}L^{\frac{30}{13}}_{x}} \|u_{1}\|^{4}_{L^{10}_{t}L^{10}_{x}} \\
				\leq &  C \|\nabla (u_{1}-u_{2})\|_{L^{10}_{t}L^{\frac{30}{13}}_{x}} \|\nabla u_{1}\|^{4}_{L^{10}_{t}L^{\frac{30}{13}}_{x}} \\
				& + C \|\nabla u_{1} - \nabla u_{2}\|_{L^{10}_{t}L^{\frac{30}{13}}_{x}} \left( \|\nabla u_{1}\|^{3}_{L^{10}_{t}L^{\frac{30}{13}}_{x}}\|\nabla u_{2}\|_{L^{10}_{t}L^{\frac{30}{13}}_{x}}\right.\\& \left. + \|\nabla u_{2}\|^{3}_{L^{10}_{t}L^{\frac{30}{13}}_{x}}\|\nabla u_{2}\|_{L^{10}_{t}L^{\frac{30}{13}}_{x}} \right)\\
				\leq & CK^{4}\|\nabla (u_{1}-u_{2})\|_{L^{10}_{t}L^{\frac{30}{13}}_{x}}  + CK^{4}\|\nabla (u_{1}-u_{2})\|_{L^{10}_{t}L^{\frac{30}{13}}_{x}}\\
				\leq & CK^{4}\|\nabla (u_{1}-u_{2})\|_{L^{10}_{t}L^{\frac{30}{13}}_{x}}.
			\end{split}
		\end{equation*}
		Thus,
		$$	\|v_{1}(0)-v_{2}(0)\|_{L^{2}} ^{2} \leq C^2K^{8}  \|\nabla u_{1}-\nabla u_{2}\|_{L^{10}_{t}L^{\frac{30}{13}}_{x}}^{2}$$
		and 
		$$	\|\nabla v_{1}(0)-\nabla v_{2}(0)\|^{2}_{L^{2}} \leq C^2K^{8} \|\nabla u_{1}-\nabla u_{2}\|_{L^{10}_{t}L^{\frac{30}{13}}_{x}}^{2}.$$
		These bounds together give us the $H^1$-estimate
		$$	\|v_{1}(0)-v_{2}(0)\|_{H^{1}} \leq CK^{4}  \|\nabla u_{1}-\nabla u_{2}\|_{L^{10}_{t}L^{\frac{30}{13}}_{x}}.$$
		Now, let us bound the right-hand side of this inequality. To this end, first notice that
		\begin{eqnarray*}
			\|\nabla (u_{1}-u_{2})\|_{L^{10}_{t}L^{\frac{30}{13}}_{x}}  & \leq &  \|\nabla (|u_{1}|^{4}u_{1} - |u_{2}|^{4}u_{2} )\|_{L^{2}_{t}L^{\frac{6}{5}}_{x}} + \|\nabla A(\Phi^{1}-\Phi^{2})\|_{L^{1}_{t}L^{2}_{x}}\\
			& \leq & CK^{4}\|\nabla u_{1}-\nabla u_{2}\|_{L^{10}_{t}L^{\frac{30}{13}}_{x}} + C\|A(\Phi^{1}-\Phi^{2})\|_{L^{2}_{t}H^{1}_{x}} \\
			& \leq & CK^{4}\|\nabla u_{1}-\nabla u_{2}\|_{L^{10}_{t}L^{\frac{30}{13}}_{x}}  + C \|\Phi_{0}^{1}-\Phi_{0}^{2}\|_{H^{-1}}.
		\end{eqnarray*}
		So, choosing  $K>0$ small enough, we get 
		$$ \|\nabla (u_{1}-u_{2})\|_{L^{10}_{t}L^{\frac{30}{13}}_{x}}  \leq C \|\Phi_{0}^{1}-\Phi_{0}^{2}\|_{H^{-1}}.$$
		Therefore,
		\begin{equation}\label{bbb2}
			\begin{split}
				\|v_{1}(0)-v_{2}(0)\|_{H^{1}}  = &\Bigg(	\|v_{1}(0)-v_{2}(0)\|_{L^{2}} ^{2} +  	\|\nabla v_{1}(0)-\nabla v_{2}(0)\|^{2}_{L^{2}}\Bigg)^{\frac{1}{2}}\\
				\leq & CK^{4}\|\Phi_{0}^{1}-\Phi_{0}^{2}\|_{H^{-1}}.
			\end{split}
		\end{equation}
		Finally, we get by inequalities \eqref{bbb1} and \eqref{bbb2} that
$$
			\|\mathcal{B}\Phi_{0}^{1} - \mathcal{B}\Phi_{0}^{2}\|_{H^{-1}}  \leq   C \|v_{1}(0)-v_{2}(0)\|_{H^{1}} \leq  CK^{4}\|\Phi_{0}^{1}-\Phi_{0}^{2}\|_{H^{-1}},
$$
		concluding that $\mathcal{B}$ is a contraction on a small ball $B_K$ of $H^{-1}$. This
		completes the proof of Theorem \ref{main}.\qed

		
		\section{Comments and open issues}\label{Sec4}
		In this work, we have studied the local null controllability of the energy-critical C-NLS in dimension $3$. Considering the 
		 critical nonlinear Schr\"{o}dinger equation \eqref{eq1}, we showed this system to be controllable in ${H}^{1}(\mathbb{R}^{3})$--level, that is, it satisfies $u(T) =0$ for  $u_{0}\in{H}^{1}(\mathbb{R}^{3})$ satisfying
		$\|u_{0}\|_{H^{1}} \leq \delta$,  and $h(x,t) \in C([0,T];H^{1}(\mathbb{R}^{3}))$. 
		Concerning our main results, Theorems \ref{main} and \ref{lin}, the following remarks are worth mentioning.

		\begin{itemize}
			\item The main tools to achieve these results were:
			
			-- \textit{Strichartz estimates} for the Schr\"odinger operator \cite{cazenave_book,KeelTao}, which give the well-posedness theory for system \eqref{eq1}; 
			
			-- \textit{A unique continuation property} for the linear system associated with \eqref{eq1}, which follows by the Carleman estimate showed in \cite{MeOsRo} (see also \cite{Laurent1}); 
			
			-- \textit{A perturbation argument}, as presented in \cite{Zuazua1}, which allows one to extend the result for the nonlinear system \eqref{eq1}. 
			
			\vspace{0.1cm}
			
			\item For the $3d$--case, Laurent \cite{Laurent1} showed large time global internal controllability for the nonlinear Schr\"odinger equation on some compact manifolds of dimension $3$. The results therein were obtained for the nonlinearity $|u|^{2}u$ instead of $|u|^{4}u$. In this sense, our result completes the local control result given by the author for the critical case system \eqref{eq1} in $\mathbb{R}^3$. 
			
			\vspace{0.1cm} 
					\item We considered the defocusing nonlinearity in our work; however, for small initial data, the sign of the nonlinearity is not relevant, thus our proof applies equally to both nonlinearities $|u|^{4}u$ and $- |u|^{4}u$. 
						\vspace{0.1cm} 
			
			\item Since we are working on the whole space $\mathbb{R}^3$ and we have control taking the form $\varphi(x)h(x,t)$ in the system \eqref{eq1} (remember the definition of $\varphi$ in \eqref{varphi}), the geometric control condition (see, for instance, \cite{BaLeRau,RaTa}) is easily satisfied.  This corroborates the recent work of Ta\"ufer \cite{Taufer}, that is, our work expresses that we do not need a strong GCC to achieve the control properties for the Schr\"odinger equation in $\mathbb{R}^3$.
		\end{itemize}

		Thus, our work gives the first step to understanding control problems for the C-NLS in dimension $3$, and, therefore, some important open issues appear. Let us present them below.

		\begin{itemize}
			
			\item[($\mathcal{A})$]  \textbf{Stabilization problem:} Can one find a feedback control law $f=Ku$ \textit{so that the resulting closed-loop system, defocusing case,}
			$$i\partial_{t} u + \Delta u  - |u|^{4}u =Ku,\quad (t,x)\in\mathbb{R}\times\mathbb{R}^3,$$
			\textit{is asymptotically stable at an equilibrium point as $t\rightarrow+\infty$?}

			\vspace{0.1cm} 
			
			\item[($\mathcal{B}$)] \textbf{Global control problem:} If the answer to the previous question is positive, another natural issue is to obtain a global control result for the system \eqref{eq1}, i.e., a control result for large data. This would be a consequence of the global stabilization result together with the local control result shown in Theorem \ref{main}.
			
			\vspace{0.1cm} 
			
			\item[($\mathcal{C})$] \textbf{Geometric control condition (GCC) or thick-set conditions:} Our work considered the control acting on $B(0,R)^c$. From the perspective of the observation region of the linear Schr\"odinger equation, an interesting open problem appears: Are there more general sets satisfying the (GCC) such that the observability inequality is verified?
			
		\end{itemize}
		\section{Declarations}
		\subsection{Competing interests}
			There are no competing interests.
		\subsection{Funding} 
			P. Braz e Silva was partially supported by 
			\begin{itemize}
			\item CAPES/MATH-AMSUD  \#8881.520205/2020-01,
			\item CAPES/PRINT \#88887.311962/2018-00,
			\item CAPES/COFECUB \#88887.879175/2023-00,
			\item  CNPq \#421573/2023-6, \#305233/2021-1, \#308758/2018-8, \#432387/2018-8.
			\end{itemize} 
			
			R. Capistrano--Filho was partially supported by 
			\begin{itemize} 
			\item CAPES/MATH-AMSUD \#8881.520205/2020-01, 
			\item CAPES/PRINT \#88887.311962/2018-00, 
			\item CAPES/COFECUB \#88887.879175/2023-00, 
			\item CNPq \#421573/2023-6, \#307808/2021-1, \#401003/2022-1 
			\item Propesqi (UFPE). 
			\end{itemize}
			
		J. Carvalho was partially supported by
		\begin{itemize} 
		\item CNPq
		\item CAPES-MATHAMSUD \#88887.700172/2022-00 
		\item FACEPE BFD-0014-1.01/23.
		\end{itemize} 
		
		\subsection{Authors' contributions}	
			 R. Capistrano-Filho has suggested the work.  J. Carvalho, R. Capistrano-Filho, and D. Dos Santos Ferreira have participated in all phases of the project.  P. Braz e Silva has participated in the first discussions of the results and seminars in Brazil and in the final review. 
			 \subsection{Acknowledgment}
		This work is part of Carvalho's Ph.D. thesis at the Department of Mathematics of the Universidade Federal de Pernambuco. It was mostly done during her visit to the Université de Lorraine. She thanks the host institution for its warm hospitality.
		\subsection{Conflict of interest statement}  There are no conflict of interest.


\begin{thebibliography}{99}
			\bibitem{BaLeRau}C. Bardos, G. Lebeau, and J. Rauch, Sharp sufficient conditions for the observation, control, and stabilization of waves from the boundary, SIAM J. Control Optim. 305 (1992) 1024--1065.
			
			\bibitem{BaPu} L. Baudouin and J.-P. Puel, \textit{Uniqueness and stability in an inverse problem for the Schr\"{o}dinger equation}, Inverse Problems 18 (2001) 1537--1554.
			
			
			
			
			
			
			
			\bibitem{CaPa} R. A. Capistrano--Filho and A. Pampu, \textit{The fractional Schr\"odinger equation on compact manifolds: Global controllability results}, Mathematische Zeitschrift, 301 (2022) 3817--3848.
			
			\bibitem{CaGa} L. Cardoulis and P. Gaitan, \textit{Simultaneous identification of diffusion coefficient and the potential for the Schr\"{o}dinger operator with only one observation}, Inverse Problems 26 (2010).
			
			\bibitem{CaCriGa} L. Cardoulis, M. Cristofol, and P. Gaitan, \textit{Inverse problem for the Schr\"{o}dinger operator in an unbounded strip}, J. Inverse Ill-Posed Probl. 16 (2008) 127--146.
			
			\bibitem{cazenave1}T. Cazenave and F. B. Weissler, \textit{The Cauchy problem for the critical nonlinear Schrödinger equation in \( H^s \)}, Nonlinear Analysis: Theory, Methods \& Applications, 14(10), (1990) 807--836.
			
			\bibitem{cazenave_book} T. Cazenave, Semilinear Schr\"{o}dinger Equations, \textit{Amer. Math. Soc.}, Providence, RI (2003).
			
			
			\bibitem{DeGeLe} B. Dehman, P. G\'erard, and G. Lebeau, S\textit{tabilization and control for the nonlinear Schr\"{o}dinger equation on a compact surface}, Mathematische Zeitschrift, 254 (2006) 729--749.
			
			
			\bibitem{IlLaTe} R. Illner, H. Lange, and H. Teismann, \textit{A note on the exact internal control of nonlinear Schr\"{o}dinger,} in: CRM Proc. Lecture Notes, 33 (2003) 127--137.
			
			\bibitem{IlLaTe1} R. Illner, H. Lange, and H. Teismann, \textit{Limitations on the control of Schr\"{o}dinger equations}, ESAIM Control Optim. Calc. Var. 12 (2006) 615--635.
			
			
			
			
			\bibitem{KeelTao} M. Keel and T. Tao, \textit{Endpoint Strichartz estimates},  Am. J. Math. 120 (1998) 955--980.
			
			
			\bibitem{KeMe} C. E. Kenig and F. Merle, \textit{Global well-posedness, scattering, and blow-up for the energy-critical, focusing, non-linear Schr\"{o}dinger equation in the radial case}, Invent. Math, 166 (2006) 645--675.
			
			\bibitem{KeMe1} C. E. Kenig and F. Merle, \textit{Nondispersive radial solutions to energy supercritical non-linear wave equations, with applications}, American Journal of Mathematics, 130(3), (2008) 633--665.
			
			\bibitem{KoLo} V. Komornik and P. Loreti, Fourier Series in Control Theory \textit{Springer-Verlag}, (2005).
			
			\bibitem{LaTe} H. Lange and H. Teismann, \textit{Controllability of the nonlinear Schr\"{o}dinger equation in the vicinity of the ground state}, Math. Meth. Appl. Sci. 30, (2007) 1483--1505.
			
			\bibitem{Laurent} C. Laurent, \textit{Global controllability and stabilization for the nonlinear Schr\"{o}dinger equation on an interval, ESAIM Control Optimisation and Calculus of Variations},  2 (2010) 356--379.
			
			\bibitem{Laurent1} C. Laurent, \textit{Global controllability and stabilization for the nonlinear Schrödinger equation on some compact manifolds of dimension 3}, SIAM Journal on Mathematical Analysis, 42, (2010) 785--832.
			
			\bibitem{LaTriZhang} I. Lasiecka, R. Triggiani, and X. Zhang, \textit{Carleman estimates at the $H^1(\Omega)$- and $L^2(\Omega)$-level for nonconservative Schr\"{o}dinger equations with unobserved Neumann B.C}, Arch. Inequal. Appl. 2, (2004) 215--338.
			
			%
			
			\bibitem{Lions} J.-L. Lions, \textit{Exact controllability, stabilization and perturbations for distributed systems}, SIAM Rev. 30, (1988) 1--68.
			
			
			%
			
			\bibitem{MeRa}F. Merle and P. Raphaël, \textit{Sharp upper bound on the blow-up rate for the critical nonlinear Schrödinger equation}, Geometric and Functional Analysis, 13(3), (2003) 591--642.
			
			
			\bibitem{MeOsRo} A. Mercado, A. Osses, and L. Rosier, \textit{Inverse problems for the Schr\"{o}dinger equation via Carleman inequalities with degenerate weights}, Inverse Problems 24, (2008) 1--18.
			
			\bibitem{Miller} L. Miller, \textit{How violent are fast controls for Schr\"{o}dinger and plate vibrations?}, Arch. Rational Mech. Anal. 172, (2004) 429--456.
			
			\bibitem{Phung} K.-D. Phung, \textit{Observability and controllability for Schr\"{o}dinger equations}, SIAM J. Control Optim. 40, (2001) 211--230.
			
			\bibitem{RaTaTeTu} K. Ramdani, T. Takahashi, G. Tenenbaum, and M. Tucsnak, \textit{A spectral approach for the exact observability of infinite-dimensional systems with skew-adjoint generator,} J. Funct. Anal. 226, (2005) 193--229.
			
			\bibitem{RaTa} J. Rauch and M. Taylor, Exponential decay of solutions to hyperbolic equations in bounded domains, Indiana Univ.
			Math. J. 24(1), (1975) 79--86.
			
			\bibitem{RoZhaSIAM} L. Rosier and B.-Y. Zhang, \textit{Local exact controllability and stabilizability of the nonlinear Schr\"{o}dinger equation on a bounded domain}, SIAM J. Control Optim. 48, (2009) 972--992.
			
			\bibitem{RoZhaJDE} L. Rosier and B.-Y. Zhang, \textit{Exact boundary controllability of the nonlinear Schr\"{o}dinger equation}, J. Differential Equations 246, (2009) 4129--4153.
			
			\bibitem{RoZhaMMM} L. Rosier and B.-Y. Zhang, \textit{Control and Stabilization of the Nonlinear Schr\"{o}dinger Equation on Rectangles,} Mathematical Models, and Methods in Applied Sciences, 12, (2010) 2293--2347.
			
			\bibitem{Simon} J. Simon, \textit{Compact sets in the space $L^p(0,T; B)$}, Ann. Mat. Pura Appl. (4) CXLVI (1987) 65--97.
			
			\bibitem{SuSu} C. Sulem and P. L. Sulem, The Nonlinear Schrödinger Equation: Self-Focusing and Wave Collapse,  Applied Mathematical Sciences, Vol. 139, \textit{Springer} (1999).
			
			
			\bibitem{Tao} T. Tao, \textit{Nonlinear Dispersive Equations: Local and Global Analysis}. CBMS Regional Conference Series in Mathematics, American Mathematical Society (2009).
			
			\bibitem{Taufer} M. Ta\"ufer, \textit{Controllability of the Schrödinger equation on unbounded domains without geometric control condition}, ESAIM Control Optim. Calc. Var., 29 (59), (2023) 1-11.
						
			\bibitem{YuYa} G. Yuan and M. Yamamoto, \textit{Carleman estimates for the Schr\"{o}dinger equation and applications to an inverse problem and an observability inequality}, Chinese Annals of Mathematics 31(4), (2010) 555--578.
			
			\bibitem{Visan}M. Visan, \textit{The defocusing energy-critical nonlinear Schrödinger equation in higher dimensions}, Duke Mathematical Journal, 138(2), (2007) 281--374.
			
			\bibitem{Zuazua1} E. Zuazua, \textit{Exact controllability for the semilinear wave equation}, J. Math. Pures Appl. 69, 33--55 (1990)
			
			\bibitem{Zuazua} E. Zuazua, \textit{Remarks on the controllability of the Schr\"{o}dinger equation, Quantum Control: Mathematical and Numerical Challenges}, 193--211, CRM Proc. Lecture Notes, Vol. 33 (Amer. Math. Soc., 2003).
			
			
		\end{thebibliography}
	\end{document}